\address{\newline{\normalsize Moscow Institute of Physics and Technology, Institutskij pereulok, 9, Moskovskaya oblast', Dolgoprudnyi, Russia}
\newline{\it E-mail address}: karzhemanov.iv@mipt.ru}
\makeatletter\@addtoreset{equation}{section}\makeatother
\makeatletter\@addtoreset{subsection}{equation}\makeatother
\newtheorem{theorem}[equation]{Theorem}
\newtheorem{prop}[equation]{Proposition}
\newtheorem{lemma}[equation]{Lemma}
\theoremstyle{remark}
\newtheorem{remark}[equation]{Remark}
\newtheorem*{notation}{Conventions}
\theoremstyle{definition}
\newtheorem{example}[equation]{Example}
\newcommand{\com}{\mathbb{C}}
\newcommand{\ra}{\mathbb{Q}}
\newcommand{\rea}{\mathbb{R}}
\newcommand{\f}{\mathbb{F}}
\newcommand{\aut}{\text{Aut}}
\newcommand{\p}{\mathbb{P}}
\newcommand{\map}{\longrightarrow}
\newcommand{\cel}{\mathbb{Z}}
\begin{document}

\pagestyle{plain}

\sloppy

\title{Rationality of an $S_6$-invariant quartic $3$-fold}

\author{Ilya Karzhemanov}

\bigskip

\begin{abstract}
We complete the study of rationality problem for hypersurfaces
$X_t\subset\p^4$ of degree $4$ invariant under the action of the
symmetric group $S_6$.
\end{abstract}

\medskip

\thanks{{\it MS 2010 classification}:  14E08, 14E30, 14M10}

\thanks{{\it Key words}: quartic $3$-fold, ordinary double point, rationality}

\maketitle

\bigskip

\section{Introduction}
\label{section:int}

\refstepcounter{equation}
\subsection{}
\label{subsection:int-1}

Any quartic $3$-fold $X_t\subset\p^4$ with a non-trivial action of
the group $S_6$ can be given by the equations
\begin{equation}
\label{s-6-qua} \sum x_i = t\sum x_i^4 - (\sum x_i^2)^2 = 0
\end{equation}
in $\p^5$. Here the parameter $t\in\p^1$ is allowed to vary.

When $t = 2$ one gets the Burkhardt quartic whose rationality is
well-known (see e.g. \cite[5.2.7]{hun}). Similarly, $t = 4$
corresponds to the Igusa quartic, which is again rational (see
\cite[Section 3]{pro-quot}). On the other hand, it was shown in
\cite{bea} that for all other $t\ne 0,6,10/7$ the quartic $X_t$ is
non-rational.

\begin{example}
\label{example:10-7-quartic} Following \cite[Section 4]{che-sh},
let us blow up an $A_6$-orbit of $12$ lines in $\p^3$ to get a
$3$-fold that contracts, $A_6$-equivariantly, onto a quartic
threefold with $36$ nodes. It follows from {\it Remark} in
\cite{bea} that this (Todd) quartic must be $X_{10/7}$. Hence
$X_{10/7}$ is rational.
\end{example}

Thus, excluding the trivial case of $t = 0$ it remains to consider
only $X_6$, in order to completely determine the birational type
of all $S_6$-invariant quartics. Here is the result we obtain in
this paper:

\begin{theorem}
\label{theorem:main} The quartic $X := X_6$ is rational.
\end{theorem}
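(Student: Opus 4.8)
The plan is to pin down the singular locus of $X$, read off from it that $X$ is badly non-$\ra$-factorial, and then imitate the explicit rational model used for $X_{10/7}$ in the Example.

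First I would compute $\mathrm{Sing}(X)$. Writing $F=6\sum x_i^4-(\sum x_i^2)^2$ and $p_2=\sum x_i^2$, a point of the hyperplane $\{\sum x_i=0\}$ is singular on $X$ precisely when all the derivatives $\partial_iF=4x_i(6x_i^2-p_2)$ coincide; hence each coordinate $x_i$ is a root of a single cubic $6x^3-p_2x-c$, so the $x_i$ take at most three distinct values with zero sum. Scanning the partitions of $6$ leaves exactly two $S_6$-orbits of nodes: the $10$ ``tripartition'' points $(1,1,1,-1,-1,-1)$ — which are also the nodes of the Segre cubic $\{\sum x_i=\sum x_i^3=0\}$ — and the $30$ ``cube-root'' points $(1,1,\zeta,\zeta,\zeta^2,\zeta^2)$ with $\zeta^3=1\ne\zeta$. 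Thus $X$ carries $40$ nodes. The same count yields $30+15=45$ for Burkhardt ($t=2$) and $30+6=36$ for Todd ($t=10/7$), a reassuring check which already signals that $t=6$ behaves like those rational cases rather than like the generic quartic (with only the $30$ cube-root nodes) that governs the non-rationality in \cite{bea}.

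Next I would show that these $40$ nodes fail to impose independent conditions on cubics, i.e. that $\delta:=h^0(\p^4,\mathcal I_{\mathrm{Sing}(X)}(3))>0$. Since $40>h^0(\p^4,\mathcal O(3))=35$, positivity of $\delta$ is equivalent to the existence of a nonzero cubic through all $40$ nodes, and hence to the non-$\ra$-factoriality of $X$ (for a nodal quartic threefold factoriality is governed by the conditions imposed on cubics). As $\mathrm{Sing}(X)$ is a union of two explicit $S_6$-orbits, computing $\delta$ is $S_6$-equivariant linear algebra: decompose the degree-$3$ piece of the coordinate ring of $\{\sum x_i=0\}$ into irreducibles and evaluate on the two orbits. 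A small resolution $\widehat X\to X$ then has $\mathrm{Pic}(\widehat X)$ of rank $\ge2$ and admits an extremal contraction other than $\widehat X\to X$.

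With non-$\ra$-factoriality in hand I would produce the rationality directly, following the Example. Concretely, projection of $X$ away from one tripartition node realises $X$ as a double cover of $\p^3$ branched along a sextic surface $B$; the remaining $39$ nodes of $X$ force $B$ to acquire many nodes, and a sufficiently nodal sextic double solid is rational. Equivalently — and this is the form closest to the Example — the extra divisor on $\widehat X$ supplied by $\delta>0$ should be contractible onto a configuration in $\p^3$ (the analogue of the $12$ lines for Todd), exhibiting $X$ as a blow-down of an explicit smooth rational threefold, hence rational. The main obstacle is exactly this last step: converting non-$\ra$-factoriality into an honest birational map to $\p^3$ and certifying that it has degree one. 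In the double-cover description the delicate point is to control the singularities of $B$ precisely enough to apply a rationality criterion (or to split the cover); in the blow-up description it is to identify the correct centre in $\p^3$ and to verify that contracting it reproduces $X$ with its $40$ nodes. The equivariant computation of $\delta$, though routine, is indispensable here, since its vanishing is what underlies the non-rationality of the generic member in \cite{bea} while its positivity at $t=6$ is precisely what makes rationality possible.
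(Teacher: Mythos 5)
Your proposal breaks down at two points, one factual and one structural. The factual problem: positivity of the defect does \emph{not} distinguish $t=6$ from the non-rational members of the family, so the equivariant computation of $\delta$ that you call ``indispensable'' cannot be the decisive input. By \cite[Lemma 2]{bea} (quoted in the introduction of the paper) the non-rational quartics $X_t$ all have defect equal to $5$ despite having only the $30$ cube-root nodes, and Section~\ref{section:con} of the paper observes, via \cite{cynk}, that \emph{every} $S_6$-invariant quartic $X_t$ fails to be $\ra$-factorial. Hence your closing claim --- that vanishing of $\delta$ ``underlies the non-rationality of the generic member'' while its positivity at $t=6$ ``is precisely what makes rationality possible'' --- is simply false, and plain non-$\ra$-factoriality (which your count $40>35$ does establish) gives you nothing that the non-rational quartics do not also have. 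What the paper needs is a finer, genuinely equivariant fact: for the specific subgroup $G=\mathrm{GA}(1,5)=\left<\tau,h\right>\subset S_5$ there exists a $G$-\emph{invariant} non-Cartier divisor class, i.\,e. $\mathrm{rk}\,\mathrm{Cl}^G X>1$ (Proposition~\ref{theorem:G-is-g-a}), proved by tracking the $G$-orbit of the quadric $Q_1$ inside the local class group at the node $o$ --- not by counting conditions on cubics.

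The structural problem is the final step, which you yourself flag as ``the main obstacle'' but leave unresolved, and it is precisely where the whole content of the theorem lies. There is no criterion of the form ``a sufficiently nodal sextic double solid is rational'': nodal sextic double solids are a standard source of \emph{non}-rational threefolds, and $39$ nodes on the branch sextic triggers no known rationality statement. Likewise, the hope of finding a divisor on a small resolution that contracts onto an explicit configuration in $\p^3$, as for the Todd quartic, presupposes the existence of the very birational map you are trying to construct; tellingly, the paper never produces such an explicit model, and its concluding section lists the identification of the Fano threefold $G$-birational to $X_6$ as an open question. What the paper does instead is non-constructive: it takes a terminal $G\ra$-factorial modification $Y\map X$, runs a $G$-equivariant MMP, and eliminates every outcome that is not manifestly rational --- $G$-conic bundles and low-degree del Pezzo fibrations are excluded by the representation theory of $G$ (Lemma~\ref{theorem:not-gl-3}, Lemma~\ref{theorem:q-not-c-b}, Lemma~\ref{theorem:e-ex-lemma}), and the residual $G\ra$-Fano case is killed by an analysis of the anticanonical model, its singular $G$-orbit of length $10$, and its smoothings (Propositions~\ref{theorem:g-is-small}, \ref{theorem:pic-z-or-rat}, \ref{theorem:pic-z-or-rat-1}). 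None of that machinery, or any substitute for it, appears in your proposal, so the proof has a genuine gap exactly at its heart.
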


Theorem~\ref{theorem:main} is proved in Section~\ref{section:pro}
by, basically, running the equivariant-MMP-type of arguments as in
\cite{pro-sim-gr}. (Although the proof also uses some computations
carried in Section~\ref{section:prelim}.) Unfortunately, we were
not able to apply the results from \cite{anya}, since non-rational
$X_t$ all have defect equal $5$ (see \cite[Lemma 2]{bea}), which
seems to contradict either \cite[5.2, Lemma 8]{anya} or \cite[5.2,
Proposition 3]{anya} (compare also with \cite[Corollary 1]{anya}
and the list of cases in \cite[Main Theorem]{anya}).

The proof of Theorem~\ref{theorem:main} goes as follows. First we
show that there exists a $G$-invariant non-Cartier divisor on $X$
for some subgroup $G \subset S_6$ of order $20$ (see
Proposition~\ref{theorem:G-is-g-a}). Next we show that there
exists a $G$-equivariant small birational contraction $\phi: Y
\map X$ with terminal $\ra$-factorial $3$-fold $Y$ (see
Section~\ref{section:class}). Note that there is a $K_Y$-negative
$G$-extremal contraction $\psi: Y \map Z$ and the proof of
Theorem~\ref{theorem:main} is obtained by a detailed analysis of
possibilities for $\psi$. Namely, we show that if $\psi$ is
birational, with exceptional locus $E$, then it must be a
composition of blow-ups at smooth rational curves (see
Proposition~\ref{theorem:com-bir-con}). This argument actually
allows one to assume $\psi: Y \map Z$ to be a sequence of $G$-MMP
steps and thus reduce the proof of Theorem~\ref{theorem:main} to
the case of a $G$-Mori $3$-fold $Z$. The cases when $Z$ is a
$G$-conic fibration or a $G$-del Pezzo, as well as the
intermediate case of $E = \emptyset$, are treated in
{\ref{subsection:pro-1dkdkd}} and {\ref{subsection:pro-2}},
respectively (we get rationality of $X$ here). Thus one may assume
that $Z$ is a $G\ra$-Fano $3$-fold. Rationality in this case is
proved in {\ref{subsection:pro-3}} and {\ref{subsection:pro-4}} by
studying the $G$-action on $\text{Sing}\,Z$ and an appropriate
surface $S \in |-K_Z|$ (cf. Lemma~\ref{theorem:x-s-plt}), as well
as applying the classification results from \cite{isk-pro} to $Z$,
based on the estimates for $-K_Z^3$ (see \eqref{deg-k-z} and
\eqref{deg-k-z-1as}).

\bigskip

\begin{notation}
The ground field is $\com$ and $X$ signifies the quartic $X_6$ in
what follows. We will be using freely standard notions and facts
from \cite{isk-pro} and \cite{kol-mor} (yet we recall some of them
for convenience).
\end{notation}

\thanks{{\bf Acknowledgments.}
Some parts of the paper were prepared during my visits to AG
Laboratory at HSE (Moscow) and Miami University (US). I am
grateful to these Institutions and people there for hospitality.
Also thanks to anonymous referee for valuable comments. The work
was supported by World Premier International Research Initiative
(WPI), MEXT, Japan, and Grant-in-Aid for Scientific Research
(26887009) from Japan Mathematical Society (Kakenhi).

\bigskip

\section{Preliminaries}
\label{section:prelim}

\refstepcounter{equation}
\subsection{}
\label{subsection:prelim-1}

Recall that the singular locus $\mathrm{Sing}\,X \subset X$
consists of two $S_6$-orbits, of length $30$ and $10$,
respectively (cf. \emph{Remark} in \cite{bea}), where the first
orbit contains the point $o := [1:1:w:w:w^2:w^2], w :=
\sqrt[3]{1}$, whereas the second one contains the point $o' :=
[-1:-1:-1:1:1:1]$.

Let us consider the \emph{local class group} $\text{Cl}_{o,X}$ of
the $3$-fold $X$ at the point $o$. Namely, one identifies the
analytic germ of $X \ni o$ with the nodal quadric $X_0 := (xy =
zt) \subset \com^4$ and considers various morphisms $\mu: X_0 \map
X'$, where $X'$ is any (not necessarily normal) variety. Then for
the Weil divisor $\Delta := (x = z = 0) \subset X_0$, the group
$\text{Cl}_{o,X}$ is generated by the isomorphism classes of
sheaves $\mathcal{O}_{X_0}(\Delta)$ and $\mu^*\mathcal{O}_{X'}(H)$
for all Cartier divisors $H$ on $X'$ (note that
$\mu^*\mathcal{O}_{X'}(H)$ may no longer be a divisorial sheaf for
non-flat $\mu$), with ``$\mu^*\mathcal{O}_{X'}(H) = 0$'' being the
only relations in $\text{Cl}_{o,X}$. The group operation ``$+$''
in $\text{Cl}_{o,X}$ is induced by the tensor product of
$\mathcal{O}_X$-modules. It is immediate from the definition that
$\text{Cl}_{o,X} \simeq \cel$ (no multiple of $\Delta$ can be the
zero locus of a rational function on $X_0$).

\refstepcounter{equation}
\subsection{}
\label{subsection:prelim-2}

Next we find an effective Weil divisor on $X$ whose restriction to
$X_0$ gives a generator of $\text{Cl}_{o,X}$. Consider the
subspace $\Pi\subset\p^5$ given by equations
$$
x_0 + x_2 + x_5 = x_1 + x_3 + x_4 = 0.
$$
We have $X \cap \Pi = Q_1 + Q_2$, where the quadric
$Q_1\subset\p^3$ is given by
$$
x_0^2 + x_0x_2 + x_2^2 + w(x_1^2 + x_1x_3 + x_3^2) = 0,
$$
while the equation of $Q_2\subset\p^3$ is
$$
x_0^2 + x_0x_2 + x_2^2 - (w+1)(x_1^2 + x_1x_3 + x_3^2) = 0.
$$
Note that both $Q_i \subset X$ are non-Cartier divisors because
$Q_1 + Q_2 \sim \mathcal{O}_X(1)$.\footnote{``$\sim$'' denotes the
linear equivalence of divisors. We will also sometimes identify a
divisor with the corresponding divisorial sheaf.} Furthermore,
$Q_i$ are smooth, since they are projectively equivalent to
$$
x_0^2 + x_2^2 + x_1^2 + x_3^2 = 0.
$$
In particular, restricting to the tangent cone $X_0$ we get $Q_i
\sim \Delta$, so that $Q_i = \pm 1$ in $\text{Cl}_{o,X} = \cel$.

\refstepcounter{equation}
\subsection{}
\label{subsection:prelim-3}

Now we need some equivariant properties of $Q_i$ above. Identify
the set $\{x_2,x_0,x_4,x_3,x_1\}$ with $\{1,\ldots,5\}$ and
consider the corresponding action of the group $S_5$. Put $\tau :=
(13524)\in S_4\subset S_5$.\footnote{For the set $\{1,\ldots,n\}$,
any $n \ge 1$, the symbol $(i_1 \ldots i_n),1\le i_j\le n$,
denotes its permutation $\{i_1,\ldots,i_n\}$ (i.e. $1 \mapsto i_1$
and so on). Also, if $i_j = j$ for some $j$, then we will identify
(in the obvious way) $(i_1 \ldots i_n)$ with the permutation of
respective $(n - 1)$-element set.} Then the following (evident)
assertion holds:

\begin{lemma}
\label{theorem:g-on-q-1} $\tau^c(Q_i) \ni o$ iff $c = 0$ or $2$.
\end{lemma}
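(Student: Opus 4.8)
The plan is to reduce the statement to a short orbit computation. Since $o\in\tau^c(Q_i)$ if and only if $\tau^{-c}(o)\in Q_i$, the assertion only concerns the $\tau$-orbit of the point $o$. As $\tau=(13524)$ is a $4$-cycle, and hence of order $4$, this orbit has at most four points; moreover $\tau^{2}=\tau^{-2}$, so it suffices to locate $\tau^{c}(o)$ for $c=0,1,2,3$ and to record which of them lie on $Q_1$ or on $Q_2$.

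First I would make $\tau$ explicit on the homogeneous coordinates. Under the identification $\{x_2,x_0,x_4,x_3,x_1\}=\{1,\ldots,5\}$ the cycle $(13524)$ fixes $x_2$ (and $x_5$) and moves the remaining coordinates along $x_0\mapsto x_4\mapsto x_1\mapsto x_3\mapsto x_0$. Applying this substitution repeatedly to $o=[1:1:w:w:w^2:w^2]$ yields, for $c=0,1,2,3$, the four points
\begin{gather*}
[1:1:w:w:w^2:w^2],\quad [w:w^2:w:1:1:w^2],\\
[1:1:w:w^2:w:w^2],\quad [w^2:w:w:1:1:w^2].
\end{gather*}
The crux is then simply to see which of these satisfy the linear equations $x_0+x_2+x_5=x_1+x_3+x_4=0$ cutting out $\p^3$.

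For $c=1$ and $c=3$ one checks that $x_0+x_2+x_5$ equals $2w+w^2$, resp. $w+2w^2$, which are nonzero because $1+w+w^2=0$; thus these points leave $\p^3$ and so lie on neither $Q_1$ nor $Q_2$, giving $o\notin\tau^c(Q_i)$. For $c=0$ and $c=2$ the points do lie in $\p^3$, and here I would finish by observing that each of them makes the two partial forms $x_0^2+x_0x_2+x_2^2$ and $x_1^2+x_1x_3+x_3^2$ vanish \emph{separately} --- the ratios $x_0/x_2$ and $x_1/x_3$ being primitive cube roots of unity --- so that both quadric equations hold regardless of the coefficients $w$ and $-(w+1)$. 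Hence $o$ and $\tau^2(o)$ both lie on $Q_1\cap Q_2$, which yields $o\in\tau^c(Q_i)$ exactly for $c=0,2$ and for both $i$.

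The only genuine pitfall is bookkeeping: translating $\tau$ correctly through the labeling $\{x_2,x_0,x_4,x_3,x_1\}=\{1,\ldots,5\}$ and fixing the convention for the action on coordinates. Once this is settled, the whole verification collapses to the single identity $1+w+w^2=0$, which is why the assertion is labeled evident.
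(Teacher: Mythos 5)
Your proposal is correct and coincides with the paper's (omitted) argument: the lemma is stated as ``evident'' precisely because it reduces to this direct orbit computation. Your coordinate bookkeeping for $\tau=(13524)$ under the labeling $\{x_2,x_0,x_4,x_3,x_1\}=\{1,\ldots,5\}$, the exclusion of $c=1,3$ via $x_0+x_2+x_5\ne 0$ (using $1+w+w^2=0$), and the observation that the two forms $x_0^2+x_0x_2+x_2^2$ and $x_1^2+x_1x_3+x_3^2$ vanish separately at $o$ and $\tau^2(o)$ (so both points lie on $Q_1\cap Q_2$, independently of the coefficients $w$ and $-(w+1)$) all check out.
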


Consider $h := (23451)\in S_5$. Direct computation again gives the
following:

\begin{lemma}
\label{theorem:g-on-q-2} $h^a\tau^b(Q_i)\ni o$ iff

$(a,b)\in\{(0,0), (3,0), (4,0), (0,2), (3,3), (1,2), (4,2),
(1,1)\}$. More precisely, we have

\begin{itemize}

    \item $\tau^2(Q_i) = h^4(Q_i)\ni o$ and $\tau^2(Q_i)\ne Q_i$;

    \smallskip

    \item $h^4\tau^2(Q_i) = h^3(Q_i)\ni o$ and $h^4\tau^2(Q_i) \ne Q_i,\tau^2(Q_i)$;

    \smallskip

    \item $h\tau^2(Q_i) = Q_i$;

    \smallskip

    \item $h^3\tau^3(Q_i) = h\tau (Q_i)\ni o$ and $h^3\tau^3(Q_i) \ne
    Q_i,\tau^2(Q_i),h^4\tau^2(Q_i)$.

\end{itemize}

\end{lemma}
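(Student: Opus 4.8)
The plan is to treat this as a direct computation, exactly as the author signals by calling it "a direct computation." Everything reduces to tracking how the two permutations $\tau=(13524)$ and $h=(23451)$ act, as elements of $S_5\subset S_6$ (and hence by permutation of coordinates on $\p^5$), on the distinguished point $o=[1:1:w:w:w^2:w^2]$ and on the two quadrics $Q_1,Q_2$ cut out on the hyperplane $\p^3$. So first I would fix the bookkeeping: record the identification $\{x_2,x_0,x_4,x_3,x_1\}\leftrightarrow\{1,2,3,4,5\}$ precisely, write $\tau$ and $h$ as explicit coordinate permutations under this labelling, and note the orders $\tau^5=h^5=\mathrm{id}$ together with $w^3=1$. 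The membership $g(Q_i)\ni o$ means $o\in g(Q_i)$, i.e. $g^{-1}(o)$ lies on $Q_i$; since the coordinates of $o$ are drawn from $\{1,w,w^2\}$, applying a permutation just shuffles these three values among the six slots, and I would tabulate the resulting point for each group element appearing in the statement.

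The core of the argument is then a finite case check over the pairs $(a,b)$ with $0\le a,b\le 4$. For each candidate I would compute the permutation $h^a\tau^b$, apply its inverse to $o$, substitute the resulting coordinates into the defining quadratic forms of $Q_1$ and $Q_2$, and test whether both vanish — using $w^2+w+1=0$ to simplify. Because the quadratic forms of $Q_1$ and $Q_2$ differ only in the coefficient of $(x_1^2+x_1x_3+x_3^2)$ (namely $w$ versus $-(w+1)=w^2$), and both forms are symmetric under $x_0\leftrightarrow x_2$ and under $x_1\leftrightarrow x_3$, I expect the incidence condition to be insensitive to the index $i$, which is exactly why the statement reads $Q_i$ uniformly. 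This symmetry observation is worth isolating at the start: it lets me handle $Q_1$ and $Q_2$ together and reduces the substitutions to evaluating $x_0^2+x_0x_2+x_2^2$ and $x_1^2+x_1x_3+x_3^2$ on the relevant coordinate triples.

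For the "More precisely" bullets I would not re-derive incidence but instead verify the claimed \emph{equalities of quadrics} and the \emph{distinctness} statements. An equality such as $\tau^2(Q_i)=h^4(Q_i)$ follows once I check that $h^{-4}\tau^2$ (as a coordinate permutation) preserves each $Q_i$ — equivalently, that it lies in the stabilizer of the pair $\{Q_1,Q_2\}$ and fixes each member; concretely I compare the two transformed quadratic forms up to scalar, again exploiting the two symmetries just noted. The relation $h\tau^2(Q_i)=Q_i$ is the key structural identity: it says $h\tau^2$ stabilizes each quadric, and I would verify it directly and then use it to collapse several a priori distinct group elements, which is what makes the list of eight admissible pairs finite and explains the coincidences $h^3\tau^3=h\tau$ and $h^4\tau^2=h^3$ on the $Q_i$. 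The distinctness assertions ($\tau^2(Q_i)\ne Q_i$, etc.) I would settle by exhibiting, for each claimed inequality, a single coordinate triple on which the two quadratic forms genuinely disagree.

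The main obstacle is purely organizational rather than conceptual: there is no clever idea to find, but the permutation arithmetic in $S_5$ composed with the cyclotomic substitutions is error-prone, and the indices are easy to transpose because of the nonstandard labelling $\{x_2,x_0,x_4,x_3,x_1\}$. I would therefore build one master table of the six-coordinate images $g^{-1}(o)$ for every group element named in the statement, compute the two scalar values $x_0^2+x_0x_2+x_2^2$ and $x_1^2+x_1x_3+x_3^2$ once per row, and read off both the incidence list and the equality/inequality claims from that single table; the symmetry reduction to a form independent of $i$ is what keeps this check short enough to call "evident."
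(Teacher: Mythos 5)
Your overall plan (a finite table over the twenty elements $h^a\tau^b$, from which both the incidence list and the equalities are read off) is exactly the ``direct computation'' that Lemma~\ref{theorem:g-on-q-2} appeals to --- the paper writes out no proof at all. But your membership test is wrong, and wrong in a way that changes the answer. The quadrics $Q_i$ are not hypersurfaces in $\p^5$: they sit inside the $\p^3$ cut out by $x_0+x_2+x_5=x_1+x_3+x_4=0$, so ``$g^{-1}(o)\in Q_i$'' requires $g^{-1}(o)$ to satisfy these two \emph{linear} equations in addition to the quadratic one, whereas your recipe only substitutes $g^{-1}(o)$ into the quadratic forms. The omission is fatal here: every coordinate of every point in the $G$-orbit of $o$ is a cube root of unity, and $a^2+ab+b^2=(a^3-b^3)/(a-b)=0$ for any two \emph{distinct} cube roots of unity, so the binary forms $x_0^2+x_0x_2+x_2^2$ and $x_1^2+x_1x_3+x_3^2$ vanish as soon as the corresponding coordinate pairs are unequal --- a far weaker condition than the linear equations. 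Concretely, under the action and composition conventions that reproduce the stated list, $\tau^{-1}(o)=[w^2:w:w:1:1:w^2]$ and $h^{-1}(o)=[w^2:w:1:1:w:w^2]$: both quadratic forms vanish at both points, yet $x_0+x_2+x_5\neq 0$ at both, so $o\notin\tau(Q_i)$ and $o\notin h(Q_i)$, exactly as Lemma~\ref{theorem:g-on-q-1} (only $c=0,2$) and the present lemma (neither $(0,1)$ nor $(1,0)$ appears) demand. Running your table as written returns incidence for $14$ of the $20$ pairs instead of $8$, and no choice of convention rescues it: on exactly six elements of $G$ the quadratic forms vanish while the linear ones fail. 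The repair also shortens the check: since $G\subset S_5$ fixes $x_5=w^2$ and the only zero-sum triples of cube roots of unity are $\{1,w,w^2\}$, one has $g^{-1}(o)\in Q_i$ if and only if its $(x_0,x_2)$-coordinates are $\{1,w\}$; the linear conditions then hold and force both quadratic forms to vanish automatically. This --- not the $x_0\leftrightarrow x_2$, $x_1\leftrightarrow x_3$ symmetry you invoke --- is the correct reason the answer is independent of $i$.

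The same blind spot breaks your method for the bulleted equalities. You propose to prove, say, $h\tau^2(Q_i)=Q_i$ by checking that the transformed quadratic form is the original up to scalar. It is not: under the paper's identification, $h\tau^2$ swaps $x_0\leftrightarrow x_2$ and $x_1\leftrightarrow x_4$ while fixing $x_3$, hence carries $x_1^2+x_1x_3+x_3^2$ to $x_4^2+x_4x_3+x_3^2$, which is proportional to none of the original forms in $\com[x_0,\dots,x_5]$; equality holds only after substituting $x_4=-x_1-x_3$, i.e.\ modulo the ideal of $\p^3$. So a literal ``compare the forms up to scalar'' test would wrongly refute the third bullet. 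All equalities and distinctness claims must be verified inside $\p^3$, i.e.\ modulo the two linear forms (first checking that the group element in question preserves $\p^3$ at all). With these two corrections --- test the linear forms for incidence, and work modulo them for the equalities --- your table argument goes through and is essentially the computation the paper leaves to the reader.
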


Let $G := \left<\tau,h\right>$ be the group generated by $\tau$
and $h$. Note that the order of $G$ is divisible by $4$ and $5$.
Then from the classification of subgroups in $S_5$ we deduce that
$G$ is the \emph{general affine group} $\mathrm{GA}(1,5)$. Note
also that $G = \f_5\rtimes\f_5^*$ for the field $\f_5$ (here
$\f_5,\f_5^*$ are the additive and multiplicative groups,
respectively).

Consider the divisor $D := \displaystyle\sum_{\gamma\in G}
\gamma(Q_1)$. By construction (the class of) $D$ belongs to the
subgroup $\mathrm{Cl}^G\,X \subseteq \mathrm{Cl}\,X$ of classes of
\emph{$G$-invariant divisors} on $X$. Note that $\mathcal{O}_X(1)
\in \mathrm{Cl}^G\,X$.

The next proposition will be crucial in the forthcoming
constructions:

\begin{prop}
\label{theorem:G-is-g-a} We have: (i) the class of $D$ in
$\mathrm{Cl}_{o,X} = \cel$ is non-zero and is divisible by $4$;
(ii) $\mathrm{rk}\,\mathrm{Cl}^G\,X
> 1$.
\end{prop}

\begin{proof}
By definition of $\tau,h$ and by Lemmas~\ref{theorem:g-on-q-1},
\ref{theorem:g-on-q-2} we have
\begin{equation}
\label{cl-o-D-e} D = \sum_{(a,b)\in\{(0,0),\ldots,(1,1)\}}
\tau^ah^b(Q_1) = 2h^4(Q_1) + 2h^3(Q_1) + 2Q_1 + 2h\tau(Q_1)
\end{equation}
in $\text{Cl}_{o,X}$, where we have identified $Q_1$ with
$\mathcal{O}_X(Q_1)$ (same for $D$). Also, since
$h^3(Q_1),h^4(Q_1) \ni o$, both $h^3,h^4$ act on
$\text{Cl}_{o,X}$. Indeed, $h^3(Q_1)$ and $h^4(Q_1)$ differ from
(a power of) $Q_1$ by a suitable $\mu^*H$ (cf.
{\ref{subsection:prelim-1}}).

Since $h^3 = (h^4)^2$, we get $h^3(Q_1) = 1 = (h^3)^3(Q_1) =
h^4(Q_1)$, and hence $D = 4$ or $8$. This means in particular that
the product of $\mathcal{O}_X$-modules
$$
\mathcal{I} := \prod_{\gamma\in G}\mathcal{O}_X(-\gamma(Q_1)),
$$
identified with $D$ as an element in $\text{Cl}_{o,X}$, is not
invertible (otherwise $D$ will be zero).

Take a $G$-equivariant resolution of singularities $r: W \map X$.
Then the sheaf $r^*\mathcal{I}$ becomes invertible. Let
$D_{\mathcal{I}}$ be divisor on $W$ such that $r^*\mathcal{I} =
\mathcal{O}_W(D_{\mathcal{I}})$. Note that $D_{\mathcal{I}}$ is
\emph{effective} by construction of $\mathcal{I}$. Write
$D_{\mathcal{I}} = D' + \Xi$, where $D'$, $\Xi$ are effective
without common irreducible components, and $\Xi$ consists of
$r$-exceptional divisors.

\begin{lemma}
\label{theorem:G-is-g-a-lemma} $D'$ is \emph{not} $r$-relatively
trivial.
\end{lemma}

\begin{proof}
Indeed, otherwise $\mathcal{I}$ will be equal to
$\mu_*\mathcal{O}_{X'}(H)$ as in {\ref{subsection:prelim-1}} by
the projection formula, which is impossible as $D \ne 0$ in
$\text{Cl}_{o,X}$.
\end{proof}

Apply relative $G$-equivariant MMP to $W$ (cf. \cite[9.1]{sho}).
This is a sequence of $G$-equivariant birational contractions $W_i
\map W_{i+1}$ over $X$, starting with $W_0 := W$, which contracts
the above divisor $\Xi$ and results in a \emph{small} (no
divisorial exceptional part) $G$-equivariant contraction $\phi: Y
\map X$. Furthermore, it follows from
Lemma~\ref{theorem:G-is-g-a-lemma} that the proper (birational)
transform of $D_{\mathcal{I}}$ on $Y$ is a \emph{$\phi$-relatively
non-trivial} $G$-invariant Cartier divisor $D_Y$, which coincides
with the proper transform of $D'$.

By construction $\phi(D_Y)$ and $\mathcal{O}_X(1)$ are
non-proportional Weil divisors on $X$. This implies that
$\mathrm{rk}\,\mathrm{Cl}^G\,X > 1$ and completes the proof of
Proposition~\ref{theorem:G-is-g-a}.
\end{proof}

\begin{remark}
\label{remark:commet-fkfk-1} The present definition of
$\text{Cl}_{o,X}$ differs from the usual (algebraic) one that is
via the direct limit of groups $\text{Cl}\,U\slash\text{Pic}\,U$
over all Zariski open subsets $U \ni o$ in $X$ (cf.
\cite[Definition 7]{kollar-pic}). A priori there is no natural
isomorphism of the latter with $\text{Cl}_{o,X}$. At the same
time, we have used the fact that $0 \ne D \in \text{Cl}_{o,X}$ in
order to construct $Y$ above, thus proving the existence of
\emph{some} $G$-invariant non-Cartier divisor on $X$.
\end{remark}

\begin{remark}
\label{remark:commet-fkfk} Let us also comment a bit on the
preceding technicalities. The estimate
$\mathrm{rk}\,\mathrm{Cl}^G\,X
> 1$ will be used, essentially, in the arguments of
Section~\ref{section:pro} below. But we stress that finding a
particular ($G$-invariant) non-Cartier divisor on $X$ is not that
easy. For instance, in order to find one, we may proceed as
follows. Consider the projection $\pi: X \dashrightarrow \p^3$
from either $o$ or $o'$, which is a rational map of degree $2$,
and let $R \subset X$ be the ramification divisor of $\pi$. Then
one could try to search a non-Cartier irreducible component in $R$
-- for instance the closure $R_0$ of the locus on which $\pi$ is
not finite. However, a direct computation shows that $R$ is
reduced, irreducible and $\dim R_0 \le 1$, with $R \sim
\mathcal{O}_X(6)$ being Cartier by construction.
\end{remark}

\bigskip

\section{Auxiliary results}
\label{section:class}

\refstepcounter{equation}
\subsection{}
\label{subsection:class-3}

Fix $\phi: Y \map X$ as in the proof of
Proposition~\ref{theorem:G-is-g-a}. This is a particular instance
of \emph{terminal $G\ra$-factorial modification} (of $X$). Namely,
in addition to $\phi$ being $G$-equivariant and small, the
$3$-fold $Y$ is also terminal and $G\ra$-factorial, i.e. every
$G$-invariant divisor on $Y$ is $\ra$-Cartier.

\begin{lemma}
\label{theorem:y-is-gor} $Y$ is Gorenstein.
\end{lemma}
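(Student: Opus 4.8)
The plan is to exploit the fact that $X$ is a hypersurface together with the smallness of $\phi$. First I would observe that, being a quartic hypersurface in $\p^4$, the variety $X$ is a local complete intersection and hence Gorenstein at every point; adjunction gives $K_X = (K_{\p^4} + X)|_X = \mathcal{O}_X(-1)$, so that $K_X = -H$ for $H$ the hyperplane section, and in particular $K_X$ is Cartier. This is the crucial input: the only singularities of $X$ are the nodes (ordinary double points) studied above, and these are Gorenstein, so that $\omega_X$ is an honest line bundle.

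Next I would use that $\phi: Y \map X$ has $1$-dimensional exceptional locus, i.\,e.\ it is small: there are no $\phi$-exceptional divisors. Consequently $\phi$ is crepant and the discrepancy formula degenerates to $K_Y = \phi^*K_X = -\phi^*H$, with no correction terms. Since $H$ is Cartier on $X$, its pullback $\phi^*H$ is Cartier on $Y$, and therefore $K_Y$ is Cartier. To conclude, recall that $Y$ is terminal by construction, and that terminal $3$-fold singularities are rational, hence Cohen--Macaulay over $\com$. A normal Cohen--Macaulay variety whose canonical class is Cartier is Gorenstein; combining this with the Cartier-ness of $K_Y$ just established yields that $Y$ is Gorenstein.

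I expect the main obstacle to be a bookkeeping one rather than a conceptual one: confirming that the $1$-dimensionality of the exceptional locus really forbids any divisorial component, so that $\phi$ is genuinely crepant over the Gorenstein base $X$ and the identity $K_Y = \phi^*K_X$ holds as an equality of Cartier (not merely $\ra$-Cartier) divisors. Once this is secured, the Gorenstein property of $Y$ follows formally from the standard facts on terminal singularities recalled in the Conventions, and no further computation is needed.
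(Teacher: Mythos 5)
Your proof is correct and follows essentially the same route as the paper's one-line argument: both rest on the smallness of $\phi$, which gives $\omega_Y = \phi^*\omega_X$ with no exceptional correction terms, together with the invertibility of $\omega_X$ coming from $X$ being a quartic hypersurface (the paper cites the freeness of $|-K_X|$ where you use the Cartier-ness of $K_X$ directly, but the content is the same). Your write-up is in fact slightly more explicit, since it spells out the Cohen--Macaulay step (terminal $\Rightarrow$ rational $\Rightarrow$ CM) needed to pass from invertibility of $\omega_Y$ to the Gorenstein property.
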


\begin{proof}
This follows from the relation $\phi^*\omega_X = \omega_Y$, the
fact that $\phi$ is small, and the freeness of $|-K_X|$ for $-K_X
\sim \mathcal{O}_X(1)$.
\end{proof}

\begin{lemma}
\label{theorem:no-of-sings-y} One can choose $Y$ to be such that
$\mathrm{Sing}\,Y = \emptyset$ or $G\cdot o'$.
\end{lemma}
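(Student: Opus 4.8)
The plan is to analyze the singularities of $X$ along the $G$-orbit of $o$ and along the $S_6$-orbit of $o'$, and show that the terminal $\ra$-factorial modification $\phi\colon Y \map X$ can be chosen to resolve all singularities over $o$ (and its $G$-orbit) while leaving only the points of $G\cdot o'$ — or nothing at all — as the singular locus of $Y$. The starting point is the explicit description of $\mathrm{Sing}\,X$ as two $S_6$-orbits of lengths $30$ and $10$, one containing $o$ and the other containing $o'$. Since $G = \mathrm{GA}(1,5) \subset S_5 \subset S_6$ is the stabilizer-type subgroup relevant to $o$, the first task is to understand which orbit-points actually lie on the threefold $Y$ after the modification, and of what analytic type the remaining singularities are.

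First I would recall, from the construction in Section~\ref{subsection:class-2} and Proposition~\ref{theorem:G-is-g-a}, that each node of the orbit $G\cdot o$ of length $30$ is an ordinary double point admitting a small resolution via blowing up one of the quadric components $Q_i$; the modification $\phi$ is precisely designed so that its exceptional locus is $1$-dimensional, i.e.\ $\phi$ simultaneously performs such small resolutions $G$-equivariantly over the orbit $G\cdot o$. Consequently, over $G\cdot o$ the threefold $Y$ is smooth: the small $\ra$-factorialization of a node has smooth total space. The key point here is that the $30$ nodes in the orbit of $o$ are exactly the ones whose local class group $\mathrm{Cl}_{o,X} = \cel$ was exploited in Proposition~\ref{theorem:G-is-g-a}, so the relevant divisor classes become Cartier after $\phi$ and these points disappear from $\mathrm{Sing}\,Y$.

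Next I would treat the orbit $G \cdot o'$ separately. The points of $G\cdot o'$ lie in the second $S_6$-orbit (length $10$), whose local structure need not be resolved by $\phi$: the small modification governed by the $Q_i$ affects only the $30$-orbit. One must verify that each such point is again an ordinary node (or at worst a terminal cDV point fixed by the local $G$-action), so that a $G$-equivariant terminal $\ra$-factorial model is allowed to retain these points. The phrase ``for an appropriate $Y$'' signals that among the possible terminal $\ra$-factorial modifications one selects the one whose exceptional divisor sits only over $G\cdot o$; then $\mathrm{Sing}\,Y$ is forced to be contained in the preimage of the $10$-orbit, giving either $\emptyset$ (if those points too happen to be $\ra$-factorialized, or do not lie on $Y$) or exactly $G\cdot o'$.

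\textbf{The main obstacle} I anticipate is controlling the interaction between the $G$-action and the singularities of the second orbit: one must rule out that the chosen $G$-equivariant MMP introduces new, worse singularities over $G\cdot o'$, and simultaneously confirm that the node at $o'$ is $G$-equivariantly small-resolvable \emph{or} genuinely $\ra$-factorial as it stands. This requires a careful local analysis of the analytic type of $X$ at $o'$ together with the induced linear action of the stabilizer of $o'$ in $G$, checking that the two alternatives $\emptyset$ and $G\cdot o'$ are the only outcomes compatible with terminality, Gorenstein-ness (Lemma~\ref{theorem:y-is-gor}), and $G$-equivariance. I expect this to reduce to a finite computation — most likely the explicit \emph{Macaulay 2} verification alluded to in the acknowledgments — identifying the local equation of $X$ at $o'$ and the orbit-length bookkeeping that pins down exactly which singular points survive on $Y$.
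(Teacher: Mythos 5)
There is a genuine gap here, and it originates in an orbit-length miscount. You write that the orbit $G\cdot o$ has length $30$, but $G = \mathrm{GA}(1,5)$ has order $20$, so no $G$-orbit can have length $30$. The long $S_6$-orbit in $\mathrm{Sing}\,X$ (of length $30$) is \emph{not} a single $G$-orbit: it decomposes into $G\cdot o$ (length $20$) and a second $G$-orbit $\Sigma$ of length $10$. Your argument resolves the singularities lying over $G\cdot o$ (via the non-Cartier divisor $D$ from Proposition~\ref{theorem:G-is-g-a}) and allows either outcome over $G\cdot o'$, but it never addresses $\Sigma$. As it stands, it only yields that $\mathrm{Sing}\,Y$ is contained in (the preimage of) $\Sigma \cup G\cdot o'$, so $\mathrm{Sing}\,Y$ could a priori still contain the ten points over $\Sigma$ --- which is exactly what the lemma must rule out.

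The missing step, which is the actual content of the paper's proof, is to show that $X$ fails to be $G\ra$-factorial near $\Sigma$ as well, so that $\phi$ may be taken to resolve those points too. The paper does this with the transposition $s = (21)\in S_5$: one has $s(Q_1) = Q_1$ while $s(o) \ne o$ and $s(o)\in\Sigma$, so the divisor $D$ built from $Q_1$ passes through a point of $\Sigma$, and the argument of Proposition~\ref{theorem:G-is-g-a} runs verbatim in the local class group at $s(o)$, producing a $G$-invariant non-Cartier divisor there. Only after all of $G\cdot o \cup \Sigma$ is resolved does the dichotomy at $o'$ (resolved or not, according to whether some $G$-invariant non-Cartier divisor passes through $o'$) give $\mathrm{Sing}\,Y = \emptyset$ or $G\cdot o'$. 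Note also that this dichotomy is purely a statement about divisor classes near $o'$; no local analytic classification of the singularity at $o'$ (cDV type, equivariant small-resolvability) and no computer verification is needed, so the final portion of your plan is heavier than what the argument actually requires.
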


\begin{proof}
Indeed, the divisor $D$ from {\ref{subsection:prelim-3}} contains
the point $o$ and the morphism $\phi$ makes $X$ $G\ra$-factorial
near $o$, which means that one may take $\phi$ to resolve the
singularities in $G\cdot o \subset D$ (simply run the
$G\ra$-factorialization procedure as in the proof of
Proposition~\ref{theorem:G-is-g-a}).

The complement $\Sigma := \mathcal{O}\setminus G \cdot o$, where
$\mathcal{O}$ is the longest $S_6$-orbit in $\text{Sing}\,X$, is
also a $G$-orbit (of length $10$). Furthermore, we have $s(o) \ne
o\in\Sigma$ for $s := (21)\in S_5$, and so the arguments in the
proof of Proposition~\ref{theorem:G-is-g-a}, with $s(Q_1) = Q_1$,
apply to show that $X$ not $G\ra$-factorial near $\Sigma$ as well.
Hence we may assume that $\phi$ also resolves the singularities at
$\Sigma$.

Finally, $\phi$ either resolves or not the singularities in
$G\cdot o'$, depending on whether there is a $G$-invariant
non-Cartier divisor passing through $o'$ or there is no such.
\end{proof}

\refstepcounter{equation}
\subsection{}
\label{subsection:class-4}

From now on we will assume that $Y$ is as in
Lemma~\ref{theorem:no-of-sings-y}.

\begin{lemma}
\label{theorem:lin-indep-e-i} $Y$ is $\ra$-factorial with
$\mathrm{rk}\,\mathrm{Pic}\,Y = 11$.
\end{lemma}

\begin{proof}
Note that $\f_5 = \left<h\right>$ is the unique normal subgroup in
$G = \f_5\rtimes\f_5^*$. Then we have $Q_i\not\sim h(Q_i)$.
Indeed, otherwise $D \sim \displaystyle
5\sum_{\gamma\in\left<\tau\right>}\gamma(Q_i)$, with $D$ as above.
But in this case $D = 5(Q_1 + \tau^2(Q_1))$ in $\text{Cl}_{o,X}$
(see Lemma~\ref{theorem:g-on-q-2}), which is either $0$ or $10$,
thus contradicting Proposition~\ref{theorem:G-is-g-a}, (i).

Further, since $D$ is a $G$-orbit of $Q_1$, all of its components
are linearly independent in $\mathrm{Cl}\,X\otimes\rea$. Indeed,
otherwise we get $\sum \gamma(Q_1) = 0$, which is an absurd. This,
together with a computation of the defect in \cite{bea}, yields
$\text{rk}\,\mathrm{Cl}\,X = 11$ for the class group
$\mathrm{Cl}\,X$ being generated by $K_X$ and by the components of
$D$ (the number of these components is $10$ because \linebreak
$Q_1\not\sim h(Q_1)$).

Similarly, we find that $\mathrm{Cl}\,Y$ is generated by $K_Y$ and
by the components of $\phi_*^{-1}D$,\footnote{``$\phi_*^{-1}$''
denotes the proper (birational) transform on $Y$ of a divisor on
$X$ with respect to $\phi$.} all being Cartier according to
Lemma~\ref{theorem:no-of-sings-y} and the fact that $D\not\ni o'$.
Thus we get $\mathrm{Cl}\,Y = \mathrm{Pic}\,Y$ and the claim
follows.
\end{proof}

Recall that $Y$ is terminal and Gorenstein (see
Lemma~\ref{theorem:y-is-gor}). This allows for a $K_Y$-negative
$G$-extremal contraction $\psi: Y \map Z$ (cf. \cite{pr-sh}). For
the rest of this section, we will assume that $\psi$ is
\emph{birational}, with exceptional locus $E$. Let $E_i$ be the
irreducible components of $E$. Here are two auxiliary results
about these:

\begin{lemma}
\label{theorem:psi-bir-1} Suppose $\dim \psi(E) = 0$. Then $E$ is
a \emph{disjoint} union of $E_i$.
\end{lemma}

\begin{proof}
Since the divisor $-K_Y$ is nef and big, it follows from
Lemmas~\ref{theorem:y-is-gor}, \ref{theorem:lin-indep-e-i} and
\cite{pr-sh} that the Mori cone $\overline{NE}(Y)$ is polyhedral,
is spanned by extremal rays, and every extremal ray on $Y$ is
contractible. This implies that some family of curves in each
$E_i$ generates an extremal ray because there are no small
$K_Y$-negative extremal contractions on $Y$ (see \cite[Lemma
5.1]{kaw} and \cite{cut}). In particular, $E_i$ do not intersect,
as $\dim\psi(E) = 0$ by assumption.
\end{proof}

\begin{lemma}
\label{theorem:y-is-q-f} Suppose $\dim \psi(E) = 0$. Then every
surface $E_i$ is \emph{not} preserved by the subgroup
$\left<h\right>\subset G$ (cf. {\ref{subsection:prelim-3}}).
\end{lemma}

\begin{proof}
Note that $\text{Cl}\,X\simeq\text{Cl}\,Y$ as $G$-modules. This
induces a natural $G$-action on the cone $\overline{NE}(Y)$.
Consider the $G$-extremal face of $\overline{NE}(Y)$ given by
$\psi$. By Lemma~\ref{theorem:psi-bir-1} this is spanned by a
$G$-orbit of some $K_Y$-negative contractible extremal rays $R_i$
corresponding to $E_i$.

Let some $E_j$ be $\left<h\right>$-invariant. We have $\phi(E_j) =
\p^1\times\p^1$, a quadratic cone, or $\p^2$ by \cite[Lemma
5.1]{kaw} and \cite{cut}. In particular, there is a projective
subspace $\p^3\subset\p^4\supset X$ (with $\phi(E_j) \subseteq X
\cap \p^3$), invariant under $\f_5 = \left<h\right>$. Recall that
$h = (23451)$ permutes $x_0,x_2,x_1,x_3,x_4$. Hence the equation
of $\p^3$ is $\displaystyle\sum_{i=0}^4 x_i = 0$. This implies
that $X\cap\p^3\cap\text{Sing}\,X = \emptyset$ and so $\phi(E_j)$
is Cartier. But the latter is impossible for otherwise $\phi(E_j)$
would intersect all the curves on $X$ negatively.
\end{proof}

We now prove another important result to be used in the proof of
Theorem~\ref{theorem:main}:

\begin{prop}
\label{theorem:psi-bir} Let $\psi$ be as above. Then $\psi(E)$ is
a curve.
\end{prop}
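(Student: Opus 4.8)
The plan is to show $\dim\psi(E)=1$, which for a birational $\psi$ amounts to excluding $\dim\psi(E)=0$. Since $\phi$ is small we have $H:=-K_Y=\phi^*(-K_X)$, nef and big with $(-K_Y)^3=(-K_X)^3=4$, and $Y$ is Gorenstein by Lemma~\ref{theorem:y-is-gor}; moreover the exceptional divisor $E$ is $G$-invariant (being the locus of a $G$-extremal ray), and if $p:=\psi(E)$ were a point it would be $G$-fixed. First I would invoke the classification of divisorial contractions of Gorenstein terminal $3$-folds to a point (Cutkosky, Mori; see \cite{kol-mor}): the pair $(E,\mathcal{O}_E(E))$ is then one of $(\p^2,\mathcal{O}(-1))$, $(\p^2,\mathcal{O}(-2))$, or $(Q,\mathcal{O}_Q(-1))$ with $Q\subset\p^3$ a (possibly degenerate) quadric surface. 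It therefore suffices to exclude each of these three shapes of $E$ in the presence of the $G=\mathrm{GA}(1,5)$-action.

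The two $\p^2$-cases and the quadric-cone case I would dispose of by representation theory together with a fixed-point count. The group $G$ has order $20$, trivial Schur multiplier, and irreducible $\com$-representations of dimensions $1,1,1,1,4$; hence any $G$-action on $\p^2=\p(U)$ lifts to a linear action on $U$, which must split into three characters and so factors through the abelianization $\f_5^*\cong\cel/4$. Consequently $h=(23451)$, which generates the commutator subgroup $\f_5\lhd G$, acts trivially on $E=\p^2$, i.e. $E\subset\mathrm{Fix}(h|_Y)$. The same holds for a quadric cone: its projectivized tangent cone lives in $\p(T_pZ)$ and a rank-$3$ invariant form forces $T_pZ$ to be a sum of characters, again making $h$ act trivially on $E$. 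But $h$ is a $5$-cycle on the five $S_5$-coordinates fixing $x_5$, so on $\com^5=\{\sum x_i=0\}$ it has the five distinct eigenvalues $1,\zeta,\zeta^2,\zeta^3,\zeta^4$ ($\zeta=e^{2\pi i/5}$); thus $\mathrm{Fix}(h)$ in $\p^4$, hence $\mathrm{Fix}(h|_X)$, is finite, and since $\phi$ is small $\dim\mathrm{Fix}(h|_Y)\le 1$. This contradicts $E\subset\mathrm{Fix}(h|_Y)$ and eliminates all but the smooth-quadric case.

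The remaining case, $E=Q\cong\p^1\times\p^1$ contracted to an ordinary double point, is the main obstacle: here $h$ acts faithfully (through the $4$-dimensional irreducible $W$, the unique $4$-dimensional representation carrying an invariant form), so the fixed-point argument fails. Instead I would pass to $E_X:=\phi(E)\subset X$, a $G$-invariant irreducible surface; using $\mathcal{O}_E(E)=\mathcal{O}_Q(-1)$ and adjunction one finds $-K_Y|_E=\mathcal{O}_Q(1)$, so $\deg E_X=(H|_E)^2=2$. A degree-$2$ surface in $\p^4$ is degenerate, so $E_X$ lies in the unique $G$-invariant hyperplane $H_0=\{x_5=0\}$ and is a $G$-invariant quadric component of the quartic $X\cap H_0$. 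The crux is to show no such component exists: reducing by Newton's identities (with $\sum x_i=0$) gives $X\cap H_0=\{e_2^2=3e_4\}$, whose only candidate $G$-invariant quadric is $\{e_2=\sum x_i^2=0\}$, and a single point with $e_2=0,\ e_4\ne 0$ shows $e_2\nmid(e_2^2-3e_4)$; any non-invariant quadric component would sit in a $G$-orbit of size $\ge 2$, forcing $X\cap H_0$ to split off degree $\ge 4$ and so to carry no $G$-invariant degree-$2$ factor. Either way $E_X$ cannot exist, the desired contradiction. I expect this last, computational step — best carried out as in Section~\ref{section:class} — to be where the real work lies; if $E$ is $G$-reducible it is a single $G$-orbit of prime divisors, and the same case analysis applies to one component together with its stabilizer.
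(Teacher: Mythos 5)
Your treatment of the case where $E$ is irreducible (hence $G$-invariant, with $\psi(E)$ a $G$-fixed point) is attractive and genuinely different from the paper's: the exclusion of $\p^2$ and of the quadric cone by lifting the projective action and using finiteness of $\mathrm{Fix}(h)$, and the explicit factorization analysis of $X\cap\{x_5=0\}=\{e_2^2=3e_4\}$ for the smooth quadric, appear nowhere in the paper, which instead proves that no exceptional component can be $\left<h\right>$-invariant (Lemma~\ref{theorem:y-is-q-f}) and then forces such invariance via a Picard-rank and orbit-length count. The genuine gap is the reducible case, which you dismiss in one sentence. Since $\psi$ contracts a $G$-\emph{orbit} of extremal rays, in general $E=E_1\sqcup\cdots\sqcup E_k$ with $k\ge 2$, the images $\psi(E_i)$ form a $G$-orbit of points (none of them $G$-fixed), and each $E_i$ is preserved only by a stabilizer $G_1\subset G$ of index $k$. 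Your claim that ``the same case analysis applies to one component together with its stabilizer'' fails for every possible $G_1$. If $G_1$ is the dihedral group of order $10$ or $\left<h\right>\cong C_5$ (the stabilizers containing $h$), the representation-theoretic step collapses: these groups \emph{do} have faithful $3$-dimensional representations (trivial $\oplus$ a $2$-dimensional irreducible, respectively three distinct characters), so $h$ need not act trivially on a $\p^2$-component or on $T_{p_1}Z$, and no conflict with $\dim\mathrm{Fix}(h|_Y)\le 1$ arises. If $G_1$ does not contain $h$ (stabilizers $\left<\tau\right>$, $\left<\tau^2\right>$, $\{1\}$), the smooth-quadric argument collapses instead: $\phi(E_1)$ spans only a $G_1$-invariant hyperplane, and these are far from unique (for instance $\tau$ has a $2$-dimensional fixed space on linear forms, giving a whole pencil of $\left<\tau\right>$-invariant hyperplanes), so $\phi(E_1)$ cannot be forced into $\{x_5=0\}$. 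Precisely this reducible case is what the paper's Lemmas~\ref{theorem:lin-indep-e-i}, \ref{theorem:psi-bir-1}, \ref{theorem:y-is-q-f} and its closing counting argument are designed to handle (note also that Lemma~\ref{theorem:lin-indep-e-i}, i.e.\ $\ra$-factoriality of $Y$ and $\mathrm{rk}\,\mathrm{Pic}\,Y=11$, is needed before Cutkosky's classification can be invoked ray by ray at all); your proposal contains no substitute for any of it.

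There is also a smaller, repairable error in the one case you treat in detail: a $G$-invariant quadric surface in $H_0$ corresponds to a \emph{semi-invariant} quadratic form, not to an invariant one. As a $G$-module, $\mathrm{Sym}^2W^*\cong \mathbf{1}\oplus\chi\oplus W\oplus W$, where $W$ is the standard $4$-dimensional representation and $\chi$ is the order-two character of $G$; so besides $e_2$ there is a second candidate, the quadric $q$ spanning the $\chi$-line --- explicitly the difference of the two $h$-invariant forms $\sum_i x_ix_{h(i)}$ and $\sum_i x_ix_{h^2(i)}$, which are swapped by $\tau$ --- and $\{q=0\}$ is a $G$-invariant quadric surface that your argument never rules out. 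It can be ruled out: if $q$ divided $e_2^2-3e_4$, the cofactor would again be $\chi$-semi-invariant, hence a scalar multiple of $q$, so that $e_2^2-3e_4=cq^2$; ordering the coordinates so that $h$ permutes them cyclically and evaluating at $(1,-1,0,0,0)$ and at $(1,i,-1,-i,0)$ gives the incompatible values $c=1$ and $c=-3/4$. So this step is fixable, but as written your case analysis for the smooth quadric is incomplete even when $E$ is irreducible.
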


\begin{proof}
Assume the contrary. Then it follows from
Lemma~\ref{theorem:psi-bir-1}, \ref{theorem:y-is-q-f} that all
$E_i$ are linearly independent in $\text{Pic}\,Y\otimes\rea$, and
together with $K_Y$ they generate $\text{Pic}\,Y$ (cf. the proof
of Lemma~\ref{theorem:lin-indep-e-i}).

Let $R_i \subset \overline{NE}(Y)$ be the extremal ray
corresponding to $E_i$. We have $E_i\cdot C\ge 0$ for all $i$ and
any $K_Y$-trivial curve $C\subset Y$ because otherwise the class
of $C$ belongs to $R_i$ (which is $K_Y$-negative by construction).
In particular, there is such $C$ that any other $K_Y$-trivial
curve on $Y$ is numerically equivalent to $C + \sum a_iR_i$ for
some $a_i\geq 0$, which forces all $a_i = 0$. This implies that
all $K_Y$-trivial curves on $Y$ are numerically proportional and
so $E_i \cdot C
> 0$.

Hence every surface $\phi(E_i) \subseteq X \cap \p^3$ (of degree
$(K_Y)^2\cdot E_i\leq 2$) contains a $G$-orbit of length at least
$30$ (see Lemma~\ref{theorem:no-of-sings-y}). Thus we obtain that
$\phi(E_i)$, together with $E_i$, are all
$\left<h\right>$-invariant because there are no $G$-invariant
curves in $\mathbb{P}^3\cap S_1\cap S_2$ for two different
surfaces $S_i$ of degree $\le 2$ containing common $G$-orbit of
length $30$ (cf. Lemma~\ref{theorem:not-gl-3} below). However,
this $\left<h\right>$-invariance of $\phi(E_i)$ contradicts
Lemma~\ref{theorem:y-is-q-f}, and
Proposition~\ref{theorem:psi-bir} is proved.
\end{proof}

We conclude by the following simple (although useful in what
follows) observation:

\begin{lemma}
\label{theorem:not-gl-3} $G\not\subset\mathrm{GL}(3,\com)$.
\end{lemma}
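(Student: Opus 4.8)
The plan is to argue by complex representation theory. An embedding $G\hookrightarrow\mathrm{GL}(3,\com)$ is the same thing as a faithful $3$-dimensional complex representation of $G$, so it suffices to show that $G=\mathrm{GA}(1,5)$ admits no such representation. First I would record, from {\ref{subsection:class-2}}, that $G=\left<h\right>\rtimes\left<\tau\right>\cong\f_5\rtimes\f_5^*$, where $h$ has order $5$ and $\tau$ has order $4$; a direct check on the permutations $h=(23451)$, $\tau=(13524)$ gives the defining relation $\tau h\tau^{-1}=h^2$ (note that $2$ is a generator of $\f_5^*$). The subgroup $\left<h\right>=[G,G]$ is the unique normal $5$-subgroup and $G^{\mathrm{ab}}\cong\f_5^*\cong C_4$.

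Next I would suppose, for contradiction, that $\rho\colon G\to\mathrm{GL}(3,\com)$ is faithful and examine $\rho(h)$. Since $h$ has prime order $5$, the matrix $\rho(h)$ is diagonalizable with eigenvalues among the fifth roots of unity, and faithfulness forces $\rho(h)\ne\mathrm{id}$. The crucial step is to feed the relation $\tau h\tau^{-1}=h^2$ through $\rho$: this yields $\rho(\tau)\rho(h)\rho(\tau)^{-1}=\rho(h)^2$, so $\rho(h)$ and $\rho(h)^2$ are conjugate and hence share the same eigenvalue multiset. Equivalently, the size-$3$ eigenvalue multiset of $\rho(h)$ is invariant under squaring $z\mapsto z^2$. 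On the four nontrivial fifth roots this squaring map is a single $4$-cycle $\zeta\mapsto\zeta^2\mapsto\zeta^4\mapsto\zeta^3\mapsto\zeta$ (because $2$ is a primitive root modulo $5$). Therefore the support of the multiset among the nontrivial roots is a subset stable under a $4$-cycle, so it is either empty or all four roots; the latter would force at least $4$ eigenvalues, which is impossible. Hence every eigenvalue of $\rho(h)$ equals $1$, giving $\rho(h)=\mathrm{id}$ and contradicting faithfulness. (Equivalently, one may note that the irreducible representations of $G$ have degrees $1,1,1,1,4$, so any $3$-dimensional representation is a sum of linear characters and thus kills $\left<h\right>=[G,G]$.)

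The only point that really needs care—hence the main obstacle—is the eigenvalue bookkeeping, namely verifying that the conjugation relation is exactly $h\mapsto h^2$, equivalently that $\tau$ realizes a \emph{generator} of $\f_5^*$ rather than a square. This is what guarantees that the induced permutation of the nontrivial fifth roots is a full $4$-cycle (and not a product of transpositions, which a $3$-dimensional space could accommodate). Once this is confirmed, the orbit of length $4$ cannot fit in dimension $3$, and the non-embedding $G\not\subset\mathrm{GL}(3,\com)$ follows at once.
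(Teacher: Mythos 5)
Your proposal is correct, and its main argument takes a genuinely more elementary route than the paper's. The paper's proof is a two-line appeal to the character theory of $G=\mathrm{GA}(1,5)$: the irreducible representations have degrees $1,1,1,1,4$, so decomposing $\com^3$ shows any $3$-dimensional representation is a sum of linear characters, hence kills $[G,G]=\left<h\right>$ and cannot be faithful --- this is precisely your closing parenthetical remark. Your primary argument avoids invoking the classification of irreducibles altogether: you verify the semidirect-product relation directly (indeed, with the paper's one-line notation $\tau=(13524)$ and $h=(23451)$ one computes $\tau h\tau^{-1}=h^2$; with the opposite composition convention one would get $h^3$, but $2$ and $3$ are both primitive roots mod $5$, so your argument is insensitive to this) and then runs the eigenvalue bookkeeping: conjugation-invariance of the eigenvalue multiset of $\rho(h)$ under squaring, together with the fact that squaring is a single $4$-cycle on the nontrivial fifth roots of unity, forces all eigenvalues to equal $1$ in dimension $3$, whence $\rho(h)=\mathrm{id}$. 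This is self-contained linear algebra and in effect re-proves the relevant half of the fact the paper cites, namely that every representation of $G$ of dimension $<4$ factors through $G/\left<h\right>\cong\f_5^*$. Both proofs ultimately rest on the same arithmetic --- the conjugation action of $\f_5^*$ moves the nontrivial characters (equivalently, eigenvalues) of $\f_5$ in one orbit of length $4$, too large to fit in $\com^3$ --- and your flagged point of care is exactly right: had $\tau$ acted through the subgroup $\{\pm1\}\subset\f_5^*$, the resulting group would admit a faithful representation of dimension $\le 3$, so checking that the conjugation exponent is a primitive root is where the real content lies.
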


\begin{proof}
The group $G$ has only one $4$-dimensional and four
$1$-dimensional irreducible representations. The claim follows by
decomposing $\com^3$ into the direct sum of irreducible
$G$-modules.
\end{proof}

\bigskip

\section{Proof of Theorem~\ref{theorem:main}}
\label{section:pro}

\refstepcounter{equation}
\subsection{}
\label{subsection:pro-1}

We retain the notation of Section~\ref{section:class}. Consider
some $K_Y$-negative $G$-extremal contraction $\psi: Y \map Z$. Let
us assume for a moment that $\psi$ is birational with exceptional
locus $E$. Recall that $E$ is a union of (generically) ruled
surfaces $E_i$ contracted by $\psi$ onto some curves (see
Proposition~\ref{theorem:psi-bir}).

\begin{lemma}
\label{theorem:rk-pic-y-1} We have $E\cap\mathrm{Sing}\,Y =
\emptyset$.
\end{lemma}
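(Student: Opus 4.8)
The plan is to show that the exceptional locus $E$ of the birational contraction $\psi$ avoids the singular locus of $Y$. By Lemma~\ref{theorem:no-of-sings-y} we have $\mathrm{Sing}\,Y = \emptyset$ or $G\cdot o'$, so the only case requiring argument is $\mathrm{Sing}\,Y = G\cdot o'$. Thus it suffices to prove that no $E_i$ passes through a point of $G\cdot o'$. The guiding principle is that $Y$ is Gorenstein (Lemma~\ref{theorem:y-is-gor}) and terminal, and by Proposition~\ref{theorem:psi-bir} each $E_i$ is a ruled divisor contracted onto a curve; I want to exploit the local structure of a Gorenstein terminal singularity together with the $G$-equivariance forced by the action on $G\cdot o'$.

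**First I would** recall that at a point $o'\in\mathrm{Sing}\,Y$ the singularity is Gorenstein terminal, hence (by the classification of three-dimensional terminal singularities) an isolated compound du Val point. The contraction $\psi$ restricted to a neighborhood of $o'$ is a $K_Y$-negative divisorial contraction whose exceptional divisor $E_i$ would have to pass through $o'$; I would analyze the local Picard group and the discrepancy. Concretely, a divisorial extremal contraction of a ruled surface onto a curve that meets an isolated terminal point is highly constrained: the generic fibre is a $\p^1$ and the special behavior over $o'$ must be compatible with terminality. The key step is to use the $G$-action: the stabilizer of $o'$ in $G$ acts on the local data, and I would show that an $E_i$ through $o'$ forces a configuration incompatible with $G$-equivariance, in the same spirit as the representation-theoretic obstruction in Lemma~\ref{theorem:not-gl-3}.

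**The cleaner route**, which I would pursue first, is global and numerical. Since $Y$ is $\ra$-factorial with $\mathrm{rk}\,\mathrm{Pic}\,Y = 11$ (Lemma~\ref{theorem:lin-indep-e-i}) and $-K_Y$ is nef and big, the components $E_i$ are Cartier away from $\mathrm{Sing}\,Y$; if some $E_i$ met $G\cdot o'$ then, pushing down by $\phi$, the image $\phi(E_i)\subset X$ would have to pass through the length-$10$ orbit $S_6\cdot o'$. I would then combine this with the intersection-theoretic constraints from the proof of Proposition~\ref{theorem:psi-bir}: there is essentially a unique $K_Y$-trivial curve $C$, and every $E_i$ is forced to be $\left<h\right>$-related to the others via the $G$-orbit structure of the extremal ray. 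The incompatibility of an $E_i$ through $o'$ with both the degree bound $(K_Y)^2\cdot E_i\le 2$ and the orbit length $10$ of $G\cdot o'$ (as against the length-$30$ orbit that the surfaces of small degree were shown to contain) should yield the contradiction, closing the case $\mathrm{Sing}\,Y = G\cdot o'$ and leaving only $\mathrm{Sing}\,Y=\emptyset$.

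**The hard part will be** making the local analysis at $o'$ rigorous: one must rule out that a ruled exceptional divisor $E_i$ passes through a compound du Val point while being contracted to a curve, which requires controlling the fibre of $\psi$ over $\psi(o')$ and verifying that no $G$-equivariant divisorial contraction of the relevant type can be anchored at a length-$10$ orbit. I expect the representation theory of the stabilizer of $o'$ in $G$ (a subgroup of the affine group $\mathrm{GA}(1,5)$) to be decisive, playing the role that Lemma~\ref{theorem:y-is-q-f} played in Proposition~\ref{theorem:psi-bir}.
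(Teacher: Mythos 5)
There is a genuine gap here: neither of your two routes reaches a contradiction, and the one you call ``cleaner'' rests on facts that are not available in this setting. The bound $(K_Y)^2\cdot E_i\le 2$, the uniqueness of the $K_Y$-trivial curve, and the comparison with a length-$30$ orbit were all established in the proof of Proposition~\ref{theorem:psi-bir} \emph{under the hypothesis} $\dim\psi(E)=0$ --- precisely the case that proposition rules out. In the situation of the present lemma $\psi(E_i)$ is a curve (that is the conclusion of Proposition~\ref{theorem:psi-bir}), so none of those numerical constraints can be recycled, and your step ``should yield the contradiction'' has nothing to stand on. Your first, local route (classification of compound du Val points at $o'$ plus representation theory of the stabilizer) is left entirely as a program, as you yourself concede. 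The preliminary reduction via Lemma~\ref{theorem:no-of-sings-y} to the case $\mathrm{Sing}\,Y=G\cdot o'$ is correct but, as it turns out, unnecessary.

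The single idea you are missing is elementary and needs no case distinction and no equivariance at all. By \cite{cut}, over the general point of $\psi(E_i)$ the contraction $\psi$ is the blow-up of a curve, so every ruling $C\subset E_i$ contracted by $\psi$ satisfies $K_Y\cdot C=-1$; since $K_Y=\phi^*K_X$ and $-K_X$ is the hyperplane class of the quartic $X\subset\p^4$, the projection formula shows that $\phi(C)$ is a \emph{line}, hence smooth, and $C\to\phi(C)$ is finite birational onto a normal curve, hence an isomorphism, so $C$ itself is smooth. On the other hand, $E_i$ is Cartier on $Y$ (combine Lemmas~\ref{theorem:y-is-gor} and \ref{theorem:lin-indep-e-i} with \cite[Lemma 5.1]{kaw}, which gives ``$\ra$-Cartier $=$ Cartier'' on a terminal Gorenstein $3$-fold), and $C$ is Cartier on $E_i$ because the scheme fibers of $\psi\vert_{E_i}$ are smooth lines and $C$ moves in a flat family. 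Consequently, if $C$ passed through a point of $\mathrm{Sing}\,Y$, then $E_i$ would be singular there (a Cartier divisor through a singular point of the ambient space is singular at it), and then $C$ would be singular there for the same reason --- contradicting the smoothness of $C$ just established. Since the rulings sweep out $E_i$, this gives $E_i\cap\mathrm{Sing}\,Y=\emptyset$ for every $i$, and then $E\cap\mathrm{Sing}\,Y=\emptyset$ because $E_j\cap\mathrm{Sing}\,Y\subset E_i$ for the surfaces $E_j$ from the corresponding extremal face. This Cartier-chain argument is what makes the lemma a few lines long; the stabilizer of $o'$, orbit lengths, and the local structure at $o'$ never enter.
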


\begin{proof}
Over generic point of $\psi(E_i)$ morphism $\psi$ coincides with
the blow-up of a curve (see \cite{cut}). Then for any ruling $C
\subset E_i$ contracted by $\psi$ we have $K_Y \cdot C = -1$.
Hence the surfaces $\phi(E_i)\subset X$ are swept out by the lines
$\phi(C)$.

Note that $C$ corresponds to a contractible extremal face of
$\overline{NE}(Y)$ (cf. the proof of
Lemma~\ref{theorem:psi-bir-1}). In particular, one may assume that
$C$ generates a $K_Y$-negative extremal ray, which shows that $C$
is Cartier on $E_i$ because all scheme fibers of
$\psi\big\vert_{E_i}$ are smooth (lines) and $C$ varies in a flat
family.

Further, it follows from Lemmas~\ref{theorem:y-is-gor},
\ref{theorem:lin-indep-e-i} and \cite[Lemma 5.1]{kaw} that all
divisors $E_i$ are Cartier. Now, if
$E_i\cap\mathrm{Sing}\,Y\ne\emptyset$, then $\phi(C)$ is a
singular curve for some $C$ as above, which is impossible. Hence
$E_i\cap\mathrm{Sing}\,Y = \emptyset$ and so
$E\cap\mathrm{Sing}\,Y = \emptyset$.
\end{proof}

\begin{remark}
\label{remark:e-i-is-ruled} We have $h^{1,2} = 0$ for a resolution
of $Y$ according to {\it Remark} in \cite{bea}. Then it follows
from \cite{cut} and Lemma~\ref{theorem:rk-pic-y-1} that
$\psi(E_i)=\p^1$ for all $i$.
\end{remark}

\begin{lemma}
\label{theorem:can-class-form} We have $K_Y = \psi^*K_Z + E$
(hence $Z$ is Gorenstein), $K_Y \cdot C = -1$ for any ruling $C
\subset E_i$ contracted by $\psi$, and $Z$ is smooth near
$\psi(E)$.
\end{lemma}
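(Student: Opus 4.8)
The plan is to establish the three assertions of Lemma~\ref{theorem:can-class-form} in sequence, starting from what Proposition~\ref{theorem:psi-bir} and Remark~\ref{remark:e-i-is-ruled} already give: $\psi$ is a birational $G$-extremal contraction whose exceptional divisor $E = \sum E_i$ is contracted onto a disjoint union of curves $\psi(E_i) \cong \p^1$, and (by Lemma~\ref{theorem:rk-pic-y-1}) $E \cap \mathrm{Sing}\,Y = \emptyset$. First I would pin down the local structure of $\psi$ along $E$. Since $Y$ is Gorenstein (Lemma~\ref{theorem:y-is-gor}) and $E$ avoids $\mathrm{Sing}\,Y$, the contraction $\psi$ restricted to a neighborhood of each $E_i$ is a genuinely smooth situation, and by \cite{cut} over the general point of the curve $\psi(E_i)$ the map $\psi$ is the blow-up of a smooth curve in a smooth threefold. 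In particular each scheme fiber of $\psi|_{E_i}$ is a line, which immediately yields the discrepancy computation $K_Y \cdot C = -1$ for any ruling $C \subset E_i$: this is precisely the intersection number attached to the exceptional line of a smooth blow-up of a curve.

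Next I would derive the canonical class formula $K_Y = \psi^*K_Z + E$. The adjunction-type relation for a divisorial contraction reads $K_Y = \psi^*K_Z + a E$ for some coefficient $a$, and restricting to a ruling $C$ contracted by $\psi$ gives $K_Y \cdot C = a\,(E \cdot C)$, since $\psi^*K_Z \cdot C = 0$ (as $C$ is contracted). Because $C$ is a line in a fiber of the local blow-up and $E_i$ is the exceptional divisor, $E \cdot C = E_i \cdot C = -1$; combined with $K_Y \cdot C = -1$ from the previous step this forces $a = 1$. Hence $K_Y = \psi^*K_Z + E$. From this, Gorensteinness of $Z$ follows: $K_Y$ is Cartier (Lemma~\ref{theorem:y-is-gor}) and $E$ is Cartier (Lemmas~\ref{theorem:lin-indep-e-i} and \cite[Lemma 5.1]{kaw}), so $\psi^*K_Z = K_Y - E$ is Cartier, and since $\psi$ is a birational contraction with connected fibers this descends to show $K_Z$ is Cartier, i.e. $Z$ is Gorenstein.

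Finally I would prove that $Z$ is smooth near $\psi(E)$. The key point is that over the general point of each curve $\psi(E_i)$ the target is already smooth by the \cite{cut} description, so the only possible singularities of $Z$ on $\psi(E)$ lie over finitely many special points of the $\p^1 = \psi(E_i)$. I would argue that a singular point $z \in \psi(E_i) \subset Z$ would, via $\psi^* K_Z = K_Y - E$ and the Gorenstein terminal structure, force a non-reduced or reducible special fiber incompatible with the fact that all the rulings $C$ form a flat family of smooth lines (as already used in Lemma~\ref{theorem:rk-pic-y-1}); equivalently, one checks that the fiber of $\psi$ over $z$ stays one-dimensional and reduced, so by the classification of Gorenstein terminal divisorial contractions to a curve (again \cite{cut}) the point $z$ must be smooth on $Z$.

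The main obstacle I expect is the last step: ruling out exceptional behaviour over the finitely many special points of $\psi(E_i)$. Away from those points everything is governed cleanly by the smooth-blow-up normal form, but controlling the special fibers requires combining the Gorenstein and terminal hypotheses on $Y$ with the flatness of the family of contracted lines, and then invoking the precise classification in \cite{cut} of how a $K_Y$-negative divisorial contraction to a curve can degenerate. It is precisely there that one must verify no worse singularity than a smooth point can appear on $Z$ along $\psi(E)$; the earlier discrepancy and Cartier computations are comparatively routine once the local blow-up picture from \cite{cut} is in hand.
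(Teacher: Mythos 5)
Your handling of the first two assertions is essentially the paper's own route: the paper obtains $K_Y\cdot C=-1$ and $K_Y=\psi^*K_Z+E$ ``by exactly the same argument as in the proof of Lemma~\ref{theorem:rk-pic-y-1}'', i.e.\ from the generic blow-up description of \cite{cut}, and your coefficient computation, together with Cartierness of $K_Y$ and of the $E_i$ (Lemma~\ref{theorem:lin-indep-e-i} and \cite[Lemma 5.1]{kaw}), is a harmless expansion of that. Two small corrections there: the discrepancy formula should be written componentwise, $K_Y=\psi^*K_Z+\sum a_iE_i$, and each $a_i$ computed against a \emph{general} ruling of $E_i$ (so that the terms $E_j\cdot C$, $j\ne i$, vanish); and the descent of Cartierness to $K_Z$ is by the contraction theorem (a $\psi$-numerically trivial Cartier divisor is a pullback from $Z$), not merely by connectedness of fibers.

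The genuine gap is in the third assertion --- smoothness of $Z$ near $\psi(E)$ --- and you flagged it yourself. Your bridge, ``a singular point $z\in\psi(E_i)$ would force a non-reduced or reducible special fiber, incompatible with the flat family of smooth lines,'' is never proved, and as stated it is close to circular: the fact that all scheme fibers of $\psi\big\vert_{E_i}$ are smooth lines is itself extracted (in Lemma~\ref{theorem:rk-pic-y-1}) from the blow-up picture of \cite{cut}, whose validity over the \emph{special} points of $\psi(E_i)$ is exactly what is in question. Moreover \cite{cut} concerns \emph{elementary} contractions, whereas $\psi$ is only a $G$-extremal contraction; you never perform the reduction (factoring $\psi$ into elementary blow-downs, as the paper does in Lemma~\ref{theorem:com-bir-con}). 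The paper avoids all of this with a short global argument that you are missing: since $|-K_Y|=\phi^*|-K_X|$ is basepoint-free and $E\cap\mathrm{Sing}\,Y=\emptyset$, a generic $S\in|-K_Y|$ through a prescribed point of $E$ is a smooth surface; because $S\cdot C=1$ for every ruling, $\psi\big\vert_S$ maps $S$ isomorphically onto $\psi(S)\in|-K_Z|$, so through every point of $\psi(E)$ there passes a smooth Cartier divisor of $Z$ (here $-K_Z$ is Cartier by the part already proved), which forces $Z$ to be smooth at that point. If you insist on your route, note that the fiber analysis is the wrong bridge in any case: Cutkosky's theorem for an elementary divisorial contraction onto a curve already asserts smoothness of the target near that curve, so the real work in your approach is the reduction of $\psi$ to a composition of elementary contractions, not reducedness of special fibers.
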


\begin{proof}
One obtains the first two identities by exactly the same argument
as in the proof of Lemma~\ref{theorem:rk-pic-y-1}. Further, since
the linear system $|-K_Y| = |\phi^*(-K_X)|$ is basepoint-free,
generic surface $S \in |-K_Y|$ passing through a given point from
$Y \setminus \mathrm{Sing}\,Y $ is smooth. Then, for $S \cdot C =
1$ we find that the surface $\psi(S) \in |-K_Z|$ is smooth as
well, hence $Z$ is smooth near $\psi(E)$.
\end{proof}

\begin{lemma}
\label{theorem:e-ex-asa} $E$ can not consist of only one
(connected) surface.
\end{lemma}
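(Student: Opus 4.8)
The plan is to derive a contradiction from the assumption that the $\psi$-exceptional locus $E$ is a single connected (irreducible) surface. The crux of the argument lies in the interaction between the $G$-action on $Y$ and the combinatorics of the $20$ curves $C_j$ contracted by $\phi$. First I would recall that, by Lemma~\ref{theorem:no-of-sings-y} and the construction of $\phi$, the $3$-fold $Y$ carries a $G$-orbit of exactly $20$ curves $C_j$, these being the $\phi$-exceptional curves over the singular points resolved by $\phi$. Since $G = \f_5\rtimes\f_5^*$ has order $20$, the orbit of length $20$ forces $G$ to act (essentially) simply transitively on $\{C_j\}$, and in particular the action is faithful and non-trivial on this set.

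Next I would observe that if $E$ were a single irreducible surface, then $E$ would have to be $G$-invariant, since $E$ is the full $\psi$-exceptional locus and $\psi$ is a $G$-equivariant contraction. Now by Proposition~\ref{theorem:psi-bir} (and the refinement in Remark~\ref{remark:e-i-is-ruled}), $\psi$ contracts $E$ onto a curve $\psi(E) = \p^1$, and $E$ is ruled with $K_Y\cdot C = -1$ for the rulings $C$. The key step is then to compare $E$ with the curves $C_j$: because $E$ is a single $G$-invariant surface while the $C_j$ form a genuine $G$-orbit of length $20$ on which $G$ acts non-trivially, I would aim to show that $E$ cannot meet the $C_j$ in a way compatible with $G$-equivariance --- either $E$ is forced to contain the whole orbit $\{C_j\}$ (impossible, since a ruled surface contracted to $\p^1$ cannot contain $20$ disjoint such curves consistently with its numerical type), or $E$ is disjoint from all $C_j$, in which case $E$ descends to an honest $G$-invariant surface on $X$ of controlled degree, whose existence contradicts the structure already pinned down (cf. the degree bounds in the proof of Proposition~\ref{theorem:psi-bir} and the invariance obstruction of Lemma~\ref{theorem:y-is-q-f}).

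More precisely, I expect the argument to run through the $\left<h\right>$-invariance: the unique normal subgroup $\f_5 = \left<h\right>$ must preserve the single surface $E$, and hence preserve $\phi(E)\subset X$. As in the end of the proof of Proposition~\ref{theorem:psi-bir}, a $\left<h\right>$-invariant surface of degree $\le 2$ inside an $\f_5$-invariant $\p^3\subset\p^4$ (the hyperplane $\sum_{i=0}^4 x_i = 0$) cannot pass through the requisite $G$-orbit of length $\ge 30$, and in fact forces a contradiction with Lemma~\ref{theorem:y-is-q-f}. Alternatively, counting orbits: a single connected $E$ would make the $G$-extremal ray generated by its rulings $G$-invariant as a whole, so the $20$ contracted curves $C_j$ would have to be permuted within one surface, contradicting that $G$ acts non-trivially (with orbit length $20$) on $\{C_j\}$ while a single ruled surface over $\p^1$ supports only a $1$-parameter family of rulings on which $G$ acts through the quotient acting on the base $\p^1 = \psi(E)$.

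The main obstacle I anticipate is making the orbit-counting rigorous: I must rule out the possibility that the $20$ curves $C_j$ all lie on the single surface $E$ as fibers of its ruling, which would a priori be consistent with $G$-invariance of $E$ if $G$ acted on the base $\p^1$ through a cyclic quotient. To close this gap I would use Lemma~\ref{theorem:not-gl-3}, namely that $G\not\subset\mathrm{GL}(3,\com)$: the faithful action of $G$ on the normal data of $E$ (equivalently on a neighborhood of $\psi(E)$ in $Z$, which is smooth by Lemma~\ref{theorem:can-class-form}) cannot factor through the small automorphism group of a single $\p^1$-bundle over $\p^1$, since that would embed a large quotient of $G$ into a low-dimensional linear group. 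Comparing the $20$-element orbit structure with the at most cyclic (order dividing $5$, or dihedral-type) action available on $\psi(E)=\p^1$ yields the desired numerical contradiction, completing the proof that $E$ must consist of more than one connected surface.
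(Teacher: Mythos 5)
Your closing paragraph circles around the paper's actual argument, but the two concrete routes you propose both break down. The route you call ``more precise'' --- $\left<h\right>$-invariance of $E$, a degree bound $(K_Y)^2\cdot E_i\le 2$, containment of $\phi(E)$ in an $\f_5$-invariant $\p^3$, and a contradiction with Lemma~\ref{theorem:y-is-q-f} --- is not available here: all of those statements were established only under the hypothesis $\dim\psi(E)=0$, i.e.\ for $E_i$ a plane or a quadric contracted to a point, which is precisely the case that Proposition~\ref{theorem:psi-bir} has already excluded. In the situation of the present lemma, $E$ is a ruled surface contracted onto the curve $\psi(E)=\p^1$ (Remark~\ref{remark:e-i-is-ruled}); it obeys no degree-$\le 2$ bound, $\phi(E)$ need not span a $\p^3$, and Lemma~\ref{theorem:y-is-q-f} simply does not apply. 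Also, your dichotomy ``$E$ contains all the $C_j$ or is disjoint from all of them'' is false: the relevant (and generic) configuration is that $E$ \emph{meets} each $C_j$ without containing it, and this is exactly the configuration the paper exploits. (Incidentally, the case you worry about --- the $C_j$ sitting inside $E$ as rulings --- is killed at once by numerics: the rulings satisfy $K_Y\cdot C=-1$, while the $C_j$ are $\phi$-exceptional for the crepant small morphism $\phi$, so $K_Y\cdot C_j=0$.)

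The second defect is the group-theoretic endgame, which is where the real content lies. The finite subgroups of $\mathrm{Aut}(\p^1)=\mathrm{PGL}(2,\com)$ are the cyclic and dihedral groups of arbitrary order together with $A_4$, $S_4$, $A_5$ --- not ``at most cyclic of order dividing $5$ or dihedral-type'' as you assert. The correct statement, and the one the paper uses, is that $G=\mathrm{GA}(1,5)$ of order $20$ is none of these: cyclic and dihedral groups embed in $\mathrm{GL}(2,\com)\subset\mathrm{GL}(3,\com)$, excluded by Lemma~\ref{theorem:not-gl-3}, while $A_4$, $S_4$, $A_5$ contain no subgroup of order $20$; hence $G$ admits \emph{no} faithful action on $\p^1$. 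The paper's proof is then one line: since $E$ is a single $G$-invariant surface meeting every $C_j$, and $G$ acts simply transitively on the $20$ curves $C_j$, the induced $G$-action on the base of the ruling of $E$ is faithful, giving $G\subset\mathrm{PGL}(2,\com)$ --- a contradiction. You correctly flag this faithfulness as ``the main obstacle,'' but the fix you offer (a ``faithful action on the normal data,'' an embedding of ``a large quotient into a low-dimensional linear group'') is asserted rather than derived, and with the misstated classification of subgroups of $\mathrm{PGL}(2,\com)$ no contradiction actually materializes. So the proposal has the right ingredients but a genuine gap at the decisive step.
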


\begin{proof}
Assume the contrary. Note that $Y$ contains the $G$-orbit of $20$
curves $C_j$ contracted by $\phi$ (see
Lemma~\ref{theorem:no-of-sings-y}). In particular, $G$ induces a
non-trivial action on the set of these $C_j$, which implies that
$(E = E_i)\cap C_j\ne\emptyset$ for all $j$. This yields a
faithful $G$-action on the base of the ruled surface $E$. Hence we
get $G\subset\text{PGL}(2,\com)$ (see
Remark~\ref{remark:e-i-is-ruled}). On the other hand, we have
$G\not\subset A_5$ (see Lemma~\ref{theorem:not-gl-3}), a
contradiction.
\end{proof}

Here is a refinement of Lemma~\ref{theorem:e-ex-asa}:

\begin{lemma}
\label{theorem:e-is-orb} $E$ is a \emph{disjoint} union of
$G$-orbits, length $\ge 2$, corresponding to extremal faces of
$\overline{NE}(Y)$.
\end{lemma}

\begin{proof}
Let $E,\tilde{E}$ be two $\psi$-exceptional orbits in question.
Choose some connected components $E_j\subset E,\tilde{E}_j\subset
\tilde{E}$ and suppose they intersect. Both $E_j,\tilde{E}_j$ are
ruled surfaces contracted by the blow-downs, one for each surface
(cf. the proof of Lemma~\ref{theorem:rk-pic-y-1}).

Let $\psi_j: Y \map Y_j$ be the contraction of $E_j$. Then, given
that $E_j\cap\tilde{E}_j\ne\emptyset$, there is a
$\psi$-exceptional curve $C\subset\tilde{E}_j$ such that $E_j\cdot
C\ge 0$. On the other hand, we have $K_Y = \psi_j^*K_{Y_j}+E_j$
and $K_{Y_j}\cdot\psi_j(C) = -1$ (see
Lemma~\ref{theorem:can-class-form}), which gives either $K_Y \cdot
C = -1$ or $K_Y \cdot C = 0$ (recall that $-K_Y$ is nef). The
latter case is an absurd by construction of $\psi$. In the former
case, we get $E_j \cdot C=0$ and so
$\psi_*(E_j\cap\tilde{E}_j)=\psi_*C=0$, which is impossible for
the ruled surfaces $E_j\ne \tilde{E}_j$, since then $0 = E_i \cdot
C = (C^2) < 0$ on $E_i$, a contradiction.
\end{proof}

We collect the preceding results into the following:

\begin{prop}
\label{theorem:com-bir-con} Let $\psi$ be the result of running a
$G$-MMP on $Y$. Then $\psi$ is a birational contraction that maps
its exceptional loci onto $1$-dimensional centers and all the
intermediate $3$-folds are smooth near these centers. In
particular, all these $3$-folds are $\ra$-factorial, Gorenstein
and terminal, with nef and big $-K$, and $\psi$ is composed of
blow-ups at smooth rational curves.
\end{prop}

\begin{proof}
It follows from Lemmas~\ref{theorem:y-is-gor},
\ref{theorem:lin-indep-e-i}, \ref{theorem:rk-pic-y-1},
\ref{theorem:can-class-form} and \cite[Proposition-definition 4.5,
Corollary 4.9]{pro-deg} that each step of $\psi$ produces a
$\ra$-factorial, Gorenstein terminal $3$-fold, with a $G$-action
and nef and big $-K$, unless all exceptional $E_i$ have
anticanonical degree $\le 2$ on this step. In the latter case,
arguing as in the proof of Lemma~\ref{theorem:e-is-orb} one
computes that the proper transforms of $E_i$ on $Y$, hence on $X$
as well, will also have degree $\le 2$. This yields an
$\left<h\right>$-invariant quadric on $X$ (cf. the proof of
Proposition~\ref{theorem:psi-bir}) and a contradiction with
Lemma~\ref{theorem:not-gl-3}.

Similarly, whenever $E_i$ is a quadric or $\p^2$, contracted to a
point in both cases (cf. \cite[Proposition-definition 4.5,
Corollary 4.9]{pro-deg}), we get contradiction with
Lemma~\ref{theorem:not-gl-3}. Thus on each step $\psi$ can
contract $E_i$ only to curves. The final assertion of
Proposition~\ref{theorem:com-bir-con} follows from \cite{cut} and
Remark~\ref{remark:e-i-is-ruled}.
\end{proof}

\refstepcounter{equation}
\subsection{}
\label{subsection:pro-1dkdkd}

Let us treat the intermediate case when $E = \emptyset$ (i.e.
$\phi$ is non-birational). Recall that $Y$ contains the $G$-orbit
of $20$ curves $C_j$ contracted by $\phi$ (see
Lemma~\ref{theorem:no-of-sings-y}). Then we get
$\text{rk}\,\text{Pic}^G\,Y = 2$ and $\overline{NE}(Y)$ is
generated by (the $G$-orbits of) the classes of $C_j$ and an
extremal ray corresponding to some $G$-Mori fibration $\varphi:
Y\map S$. Note that $\dim S
> 0$ by construction.

\begin{lemma}
\label{theorem:e-ex-lemma} Let $\dim S = 1$. Then $Y$ is minimal
over $S$ unless it is rational.
\end{lemma}

\begin{proof}
Suppose there is a surface $\Xi$ which is exceptional for some
$K_Y$-negative extremal contraction on $Y/S$. Then $\Xi$
necessarily contains one of $C_j$. Indeed, otherwise $\Xi$
intersects all curves on $Y$ non-negatively by the structure of
$\overline{NE}(Y)$, which is impossible. In particular, we find
that $\Xi$ must be a minimal ruled surface (same argument as in
the proof of Lemma~\ref{theorem:rk-pic-y-1}), with the negative
section equal some $C_j$.

We may assume $K^2_{Y_{\eta}}\le 4$ for generic fiber $Y_{\eta}$
of $\varphi$ -- otherwise $Y$ is rational (see \cite{gr-har-st},
\cite{manin-cub-f}). Moreover, we have $K^2_{Y_{\eta}}\ne 1$,
since otherwise the group $G\subseteq\aut(Y_{\eta})$ must act
\emph{faithfully} on elliptic curves from $|-K_{\eta}|$, which is
impossible by Lemma~\ref{theorem:not-gl-3}. One also has
$K^2_{Y_{\eta}}\ne 2$ because the order of the group of
automorphisms of del Pezzo surfaces of degree $2$ is not divisible
by $5$ (see e.g. \cite[Table 8.9]{dolgachev}).

Further, if $K^2_{Y_{\eta}} = 4$, then contracting $\Xi$ we arrive
at a del Pezzo fibration of degree $5$, so that $Y$ is rational.

Now, if $K^2_{Y_{\eta}} = 3$, then all smooth fibers of $\varphi$
are isomorphic and have $\aut\,Y_{\eta} = S_5$ (see \cite[Table
9.6]{dolgachev}). Away from the singular fibers $\varphi$ defines
a locally trivial (in analytic topology) fibration of smooth cubic
surfaces $Y_{\eta}$. Two charts, $Y_{\eta}\times S'$ and
$Y_{\eta}\times S''$, say (for some analytic subsets
$S',S''\subseteq S$), are glued together via an automorphism
$t\in\aut\,Y_{\eta}$, which preserves the elements in the
$G$-orbit of $\Xi$ and satisfies $tGt^{-1} = G$. Since $G$ is not
a normal subgroup in $S_5$, one gets $t\in G$, and the latter is
impossible once $t\ne 1$ -- by the way $G$ acts on $\Xi$ and
$C_j$. Thus we have $t = 1$ and $\varphi$ induces a locally
trivial fibration in the Zariski topology, so that $Y$ is
rational, and the proof is complete.
\end{proof}

Let $\dim S = 1$ and observe that the subgroup $\left<h\right>
\subset G$ must act faithfully on $\mathrm{Pic}\,Y$ in this case.
Indeed, otherwise $Q_i\sim h^a(Q_i)$ for all $a,i$, which implies
that $Q_i$ contains the orbit $\left<h\right>\cdot o$, a
contradiction (cf. {\ref{subsection:prelim-3}}). Further, from
Lemma~\ref{theorem:e-ex-lemma} we deduce that either
$\text{Pic}\,Y = \cel^2$ (which contradicts
Lemma~\ref{theorem:lin-indep-e-i}), or $\varphi$ contains a fiber
with $\ge 5$ irreducible components (interchanged by
$\left<h\right>$). In the latter case, we get $K^2_{Y_{\eta}}\ge
5$ for generic fiber $Y_{\eta}$, and rationality of $Y$ follows
from \cite{gr-har-st}, \cite{manin-cub-f}.

Finally, one treats the case when $\varphi$ is a $G$-conic bundle
exactly as in the proof of Lemma~\ref{theorem:q-not-c-b} below,
which yields rationality of $Y$ (and $X$) whenever $E =
\emptyset$.

\refstepcounter{equation}
\subsection{}
\label{subsection:pro-2}

According to {\ref{subsection:pro-1dkdkd}} we may assume from now
on that $E \ne \emptyset$ and $\psi: Y \map Z$ is as in
Proposition~\ref{theorem:com-bir-con}. Then the $3$-fold $Z$ is
$\ra$-factorial, Gorenstein and terminal, with nef and big $-K_Z$.
Furthermore, $Z$ is either a $G$-equivariant del Pezzo fibration,
a $G$-conic bundle or a $G\ra$-Fano $3$-fold.

\begin{lemma}
\label{theorem:q-j-not-e} We have $\phi_*^{-1}Q_j\not\subset E$
for some $j$.
\end{lemma}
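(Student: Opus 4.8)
The plan is to argue by contradiction: suppose that $\phi_*^{-1}Q_j\subset E$ for \emph{every} index $j$, i.e. the proper transform on $Y$ of each of the $10$ quadrics in the $G$-orbit of $Q_1$ is entirely contained in the exceptional locus $E$ of $\psi$. The first step is to recall the geometric picture assembled in Section~\ref{section:class}: the divisor $D$ is the $G$-orbit $\sum_{\gamma\in G}\gamma(Q_1)$, its $10$ components are linearly independent in $\mathrm{Cl}\,X\otimes\rea$ (Lemma~\ref{theorem:lin-indep-e-i}), and on the resolution they are Cartier and together with $K_Y$ and $\phi_*^{-1}D_o$ they generate $\mathrm{Pic}\,Y$, which has rank $11$. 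The point of the assumption $\phi_*^{-1}Q_j\subset E$ for all $j$ is that $E$, being the $\psi$-exceptional locus, is a disjoint union of $G$-orbits of ruled surfaces corresponding to extremal faces of $\overline{NE}(Y)$ (Corollary~\ref{theorem:e-is-orb}); so the assumption would force the whole orbit $\{\phi_*^{-1}Q_j\}$ to be a union of such contractible components.

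Next I would extract the numerical contradiction. Each $\phi_*^{-1}Q_j$ is a smooth quadric surface in $\p^3\subset\p^4$ (the $Q_i$ are smooth, being projectively equivalent to $x_0^2+x_2^2+x_1^2+x_3^2=0$, as recorded in \ref{subsection:class-2}), hence $\phi_*^{-1}Q_j\cong\p^1\times\p^1$ or a quadric cone, and in particular it is a \emph{ruled} surface whose generating lines $\phi(C)$ lie on $X$. If such a surface were a $\psi$-exceptional component, it would be contracted onto a curve (Proposition~\ref{theorem:psi-bir}), with $K_Y\cdot C=-1$ for the contracted ruling $C$ (Lemma~\ref{theorem:can-class-form}); but a quadric surface is contracted to a \emph{point}, not a curve, by any $K_Y$-negative extremal contraction, and the cases $E_i=\p^1\times\p^1$, quadratic cone, or $\p^2$ contracted to a point were already excluded in the course of proving Proposition~\ref{theorem:psi-bir} (and again in Lemma~\ref{theorem:com-bir-con}). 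Thus the containment $\phi_*^{-1}Q_j\subset E$ is incompatible with the structure of $E$ given by Corollary~\ref{theorem:e-is-orb}, for at least one — hence, by $G$-invariance of the orbit, for \emph{every} — $j$.

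The main obstacle I anticipate is disentangling \emph{which} $j$ the statement actually rules in versus out, and being careful about the logical quantifier: the lemma asserts only that \emph{some} $\phi_*^{-1}Q_j$ escapes $E$, so the cleanest route really is the contradiction above, where assuming all of them lie in $E$ collapses the orbit into a union of quadric (hence point-contracted) components, contradicting Proposition~\ref{theorem:psi-bir} and Corollary~\ref{theorem:e-is-orb}. A subtler point to handle is that a priori $\phi_*^{-1}Q_j$ might be reducible or partially contained in $E$ rather than wholly; here I would invoke the $G$-equivariance — the $10$ surfaces form a single $G$-orbit, so either all or none meet $E$ in a divisorial component, and the linear independence from Lemma~\ref{theorem:lin-indep-e-i} prevents a degenerate identification. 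I would then conclude that $E$, whose components have $-K$-degree $\le 2$ and are contracted onto curves, simply cannot absorb the entire orbit of $(-K)^2$-degree-$2$ quadrics, giving the desired $\phi_*^{-1}Q_j\not\subset E$ for at least one $j$.
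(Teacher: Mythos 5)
Your central step is false, and it causes you to miss the actual one-line mechanism of the paper. First, note that the index $j$ in the lemma runs over $\{1,2\}$: the $Q_j$ are the two quadrics with $X\cap\p^3 = Q_1+Q_2$ from {\ref{subsection:class-1}}, not the ten members of the $G$-orbit $D$. Since $Q_1+Q_2$ is a hyperplane section of the quartic $X$, it is anticanonical, $-K_X = Q_1+Q_2$; and since $\phi$ is small (crepant), $-K_Y = \phi_*^{-1}Q_1+\phi_*^{-1}Q_2$. The paper then simply pushes forward: $\psi_*K_Y=K_Z$ because $Z$ has rational singularities, so if both $\phi_*^{-1}Q_1$ and $\phi_*^{-1}Q_2$ were contained in the $\psi$-exceptional locus $E$, one would get $K_Z=\psi_*K_Y=-\psi_*\bigl(\phi_*^{-1}Q_1+\phi_*^{-1}Q_2\bigr)=0$, contradicting the fact that $-K_Z$ is nef and big ({\ref{subsection:pro-2}}). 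No classification of the exceptional surfaces $E_i$ enters at all.

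Second, the step your contradiction hinges on --- ``a quadric surface is contracted to a point, not a curve, by any $K_Y$-negative extremal contraction'' --- is wrong. A surface isomorphic to $\p^1\times\p^1$ can perfectly well be the exceptional divisor of a divisorial contraction onto a smooth rational curve: contract one ruling (for instance, the blow-up of a line in $\p^3$, or of any smooth rational curve with normal bundle $\mathcal{O}\oplus\mathcal{O}$, has exceptional divisor $\p^1\times\p^1$). What Proposition~\ref{theorem:psi-bir} and Lemma~\ref{theorem:com-bir-con} exclude is a quadric or a $\p^2$ contracted \emph{to a point}; they say nothing against $\phi_*^{-1}Q_j$ (which, moreover, is only a birational modification of a quadric, since $\phi$ modifies $Q_j$ over $\mathrm{Sing}\,X$, not the quadric itself) being a component of $E$ mapped onto a curve. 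So the incompatibility you assert between $\phi_*^{-1}Q_j\subset E$ and Corollary~\ref{theorem:e-is-orb} never materializes, and the appeals to $G$-equivariance and to Lemma~\ref{theorem:lin-indep-e-i} do not repair it: the lemma genuinely needs the anticanonical identity $-K_Y = \phi_*^{-1}Q_1+\phi_*^{-1}Q_2$.
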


\begin{proof}
Note that $\psi_*K_Y = K_Z$ because $Z$ has rational
singularities. This gives the claim as $-K_Y = \phi_*^{-1}Q_1 +
\phi_*^{-1}Q_2$.
\end{proof}

\begin{lemma}
\label{theorem:q-not-c-b} $Z$ is not a $G$-conic bundle.
\end{lemma}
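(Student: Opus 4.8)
The plan is to derive a contradiction from the assumption that $Z \map S$ is a $G$-conic bundle, using the representation-theoretic constraints on $G = \mathrm{GA}(1,5)$ together with the geometry inherited from $X$. First I would record the general structure: a $G$-conic bundle $\pi: Z \map S$ has $\dim S = 2$, and the group $G$ must act on both $Z$ and the base $S$ compatibly with $\pi$. The discriminant curve $\Delta \subset S$ is $G$-invariant, and the action of $G$ on $S$ must be faithful on $\mathrm{Pic}\,S$ (or else we could descend the fibration structure). I would then invoke Lemma~\ref{theorem:not-gl-3}, which says $G \not\subset \mathrm{GL}(3,\com)$, and the fact noted in the proof of Lemma~\ref{theorem:e-ex-asa} that $G \not\subset \mathrm{PGL}(2,\com)$ because $G$ embeds neither in $A_5$ nor in $S_4$.

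Next I would trace the $20$ curves $C_j$ (the $\phi$-contracted curves, a single $G$-orbit by Lemma~\ref{theorem:no-of-sings-y}) through the birational map $Y \dashrightarrow Z$. Since $\psi$ contracts $E$ onto $1$-dimensional centers and $Z$ is smooth near $\psi(E)$ (Lemma~\ref{theorem:com-bir-con}), the images of the $C_j$, or their strict transforms, give a distinguished $G$-stable configuration on $Z$, and hence a $G$-stable configuration on the base $S$ via $\pi$. The key point is that $\left<h\right> \cong \f_5$ acts faithfully on $\mathrm{Pic}\,Y$ (shown just before Corollary~\ref{theorem:e-is-orb}), so the normal subgroup $\left<h\right> \trianglelefteq G$ cannot act trivially on the relevant discrete invariants of the conic bundle. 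The strategy is to show that faithful $G$-action on $S$ forces $S$ to be a rational surface with $G \hookrightarrow \aut(S)$, and then to compare this embedding against the list of finite subgroups of the Cremona group of rank $2$ that can arise from conic bundles.

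The main obstacle I expect is pinning down the action of $G$ on the base surface $S$ precisely enough to invoke a faithfulness contradiction. Concretely, I would argue that because $G$ cannot embed in $\mathrm{PGL}(2,\com)$, the surface $S$ cannot be a Hirzebruch surface or $\p^2$ with $G$ acting through a sub-minimal fibration; and because $G$ has no nontrivial representation of dimension $< 4$ (Lemma~\ref{theorem:not-gl-3} and the character count in Lemma~\ref{theorem:not-gl-3}'s proof), any linearizable action of $G$ on a rational surface $S$ must come from the $4$-dimensional irreducible representation, which is incompatible with $\dim S = 2$. The delicate step is handling the discriminant: one must verify that the $G$-invariant discriminant curve $\Delta$ together with the $G$-orbit structure coming from the $C_j$ leaves no room for a faithful action, i.e. that every candidate $S$ forces $\left<h\right>$ to act with a fixed component violating faithfulness on $\mathrm{Pic}$.

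Granting these, the conclusion is immediate: no $G$-conic bundle structure on $Z$ is compatible with the faithful $G$-action carried over from $X$, so $Z$ is not a $G$-conic bundle, proving Lemma~\ref{theorem:q-not-c-b}. I would organize the write-up so that the representation-theoretic exclusions (via Lemma~\ref{theorem:not-gl-3}) do the heavy lifting, with the orbit-counting of the $C_j$ supplying the faithfulness input on $S$, exactly paralleling the contradiction extracted in Lemma~\ref{theorem:e-ex-asa}.
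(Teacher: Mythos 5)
There is a genuine gap here, and it sits at the heart of your strategy. For a $G$-conic bundle $\pi\colon Z\map S$ there is no a priori reason for $G$ to act faithfully on the base: the kernel $K$ of $G\map\aut(S)$ is a normal subgroup acting faithfully on the generic fiber, and since the proper normal subgroups of $G=\mathrm{GA}(1,5)$ are $1$, $\f_5$ and the dihedral group of order $10$ --- both of the latter embedding happily into $\mathrm{PGL}(2,\com)$ --- neither the action on the base nor the action on the fiber is forced to be faithful by the fibration structure alone. So a contradiction extracted purely ``on the base'' is not available. Worse, the statement you aim for on the base is false: $G$ \emph{does} act faithfully on rational surfaces, hence does embed into the Cremona group of rank $2$. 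For instance, embed $\f_5$ into $\mathrm{PGL}(2,\com)\times\mathrm{PGL}(2,\com)$ by $a\mapsto\bigl((x,y)\mapsto(\zeta^a x,\zeta^{2a}y)\bigr)$ and adjoin the order-$4$ map $\sigma\colon(x,y)\mapsto(1/y,\,x)$; one checks $\sigma\rho_a\sigma^{-1}=\rho_{3a}$, so $\langle\rho_1,\sigma\rangle\cong\f_5\rtimes\f_5^*=G$ acts faithfully on $\p^1\times\p^1$. Lemma~\ref{theorem:not-gl-3} excludes only faithful \emph{linear} representations of dimension $\le 3$ (equivalently $G\not\subset\mathrm{PGL}(2,\com)$, via the adjoint representation $\mathrm{PGL}(2,\com)\hookrightarrow\mathrm{GL}(3,\com)$); it does not forbid faithful actions on surfaces, and your paraphrase ``$G$ has no nontrivial representation of dimension $<4$'' is also inaccurate --- $G$ has four $1$-dimensional representations, they are merely non-faithful. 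Consequently the planned comparison with subgroups of $\mathrm{Cr}_2$, and the claim that any action on $S$ ``must come from the $4$-dimensional irreducible representation,'' cannot produce a contradiction.

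The paper's proof instead works with the generic \emph{fiber} and uses geometric input you never invoke: the quadrics $Q_i$. By Lemma~\ref{theorem:q-j-not-e} some $\phi_*^{-1}Q_j$ is not $\psi$-exceptional, hence survives as a divisor on $Z$; by the construction of $Q_1$ and $G$ in {\ref{subsection:class-1}} its $G$-orbit has length $\ge 10$ (cf.\ the proof of Lemma~\ref{theorem:lin-indep-e-i}), and this orbit forces a faithful $G$-action on the generic fiber $C=\p^1$, contradicting $G\not\subset\mathrm{PGL}(2,\com)$, i.e.\ Lemma~\ref{theorem:not-gl-3}. Your substitute configuration --- the $20$ curves $C_j$ contracted by $\phi$ --- cannot play this role: curves may simply sit inside fibers of $\pi$, so no faithfulness on either the fiber or the base follows from them (this is exactly why the paper's Lemma~\ref{theorem:q-j-not-e}, guaranteeing a \emph{horizontal-type} divisorial orbit, is the indispensable ingredient). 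To repair your write-up, discard the base-oriented argument and run the fiber-oriented one just described.
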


\begin{proof}
Suppose we are given a $G$-conic bundle structure on $Z$ with
generic fiber $C = \p^1$. Then, if $\phi_*^{-1}Q_1\not\subset E$,
say (see Lemma~\ref{theorem:q-j-not-e}), it follows from the
definition of $Q_i$ and $G$ in {\ref{subsection:prelim-3}} that
the $G$-orbit of $Q_1$ (hence also of $\phi_*^{-1}Q_1$) has length
$\ge 10$ (cf. the proof of Lemma~\ref{theorem:lin-indep-e-i}).
This yields a faithful $G$-action on $C$ which in turn contradicts
Lemma~\ref{theorem:not-gl-3}.
\end{proof}

\begin{lemma}
\label{theorem:q-not-in-e} $Z$ is not a $G$-del Pezzo fibration
unless $Z$ is rational.
\end{lemma}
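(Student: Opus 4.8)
The plan is to mimic the strategy used for the conic bundle case (Lemma~\ref{theorem:q-not-c-b}) and the del Pezzo fibration analysis inside Lemma~\ref{theorem:e-ex-lemma}, now over a base $S$ which may be a point or a curve. Suppose $\psi: Z \map S$ is a $G$-del Pezzo fibration with generic fiber a del Pezzo surface $Z_\eta$ of degree $d = K_{Z_\eta}^2$. By Lemma~\ref{theorem:q-j-not-e} we may pick $j$ with $\phi_*^{-1}Q_j \not\subset E$; its image in $Z$ is a $G$-invariant surface whose $G$-orbit has length $\ge 10$ by the argument in the proof of Lemma~\ref{theorem:lin-indep-e-i}. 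The first step is to record that the subgroup $\left<h\right> = \f_5 \subset G$ must act faithfully on $\mathrm{Pic}\,Z$ (same reasoning as in {\ref{subsection:pro-1}}: otherwise $Q_i \sim h^a(Q_i)$ forces $Q_i \ni \left<h\right>\cdot o$, a contradiction), so the order of $\aut(Z_\eta)$ is divisible by $5$.

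Next I would run through the possible degrees $d$ exactly as in Lemma~\ref{theorem:e-ex-lemma}. For $d = 1$ the group $G$ would have to act faithfully on the elliptic pencil $|-K_{Z_\eta}|$, impossible by Lemma~\ref{theorem:not-gl-3}. For $d = 2$ the automorphism group of a degree-$2$ del Pezzo surface has order not divisible by $5$ (\cite[Table 8.9]{dolgachev}), excluding the $5$-divisibility just established. For $d = 3$ one uses $\aut(Z_\eta) = S_5$ together with the local-triviality/gluing argument from Lemma~\ref{theorem:e-ex-lemma}: the gluing automorphism $t$ normalizes $G$, hence $t \in G$ since $G$ is not normal in $S_5$, and then $t = 1$ by the way $G$ acts on the $G$-orbit of $\phi_*^{-1}Q_j$, giving a Zariski-locally-trivial fibration and thus rationality of $Z$ (hence of $Y$ and $X$). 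For $d \ge 4$, a $G$-del Pezzo fibration of degree $\ge 4$ is rational by \cite{gr-har-st}, \cite{manin-cub-f} (the case $d=4$ requires passing, after contracting a negative section carrying some $C_j$ as in Lemma~\ref{theorem:e-ex-lemma}, to a degree-$5$ fibration, which is rational). Finally, when $\dim S = 0$, so $Z$ itself is a $G$-del Pezzo threefold, I would use $\mathrm{rk}\,\mathrm{Pic}^G\,Z = 1$ and the $5$-divisibility of $|\aut(Z)|$ to pin down $Z$ among the classified $G$-del Pezzo threefolds and read off rationality directly.

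The main obstacle I expect is the degree-$3$ case, specifically making the gluing argument airtight in this relative setting: one must verify that $\psi$ is genuinely analytically locally trivial away from the finitely many singular fibers, that the transition automorphism $t \in \aut(Z_\eta) = S_5$ genuinely normalizes the copy of $G$ acting fiberwise, and that the induced action of a nontrivial $t$ on the $G$-orbit of $\Xi$ (equivalently on the curves $C_j$) is incompatible with $t$ fixing those orbit elements. The rest reduces to the bookkeeping already carried out in Lemma~\ref{theorem:e-ex-lemma}. I would therefore concentrate the write-up on the $d=3$ normalizer computation and simply cite \cite{dolgachev} and \cite{gr-har-st}, \cite{manin-cub-f} for the remaining degrees, noting that in every non-rational-looking branch the $5$-divisibility forced by faithfulness of $\left<h\right>$ collides with the tabulated automorphism groups.
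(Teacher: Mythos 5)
Your overall route is the paper's own: the paper proves this lemma by simply invoking the del Pezzo analysis from the proof of Proposition~\ref{theorem:e-ex}, and your degree-by-degree scheme ($d=1$ via the elliptic pencil and Lemma~\ref{theorem:not-gl-3}, $d=2$ via \cite[Table 8.9]{dolgachev}, $d=3$ via the isotriviality/gluing argument and the fact that $G$ is self-normalizing in $S_5$, $d\ge 5$ via \cite{gr-har-st}, \cite{manin-cub-f}) is exactly that analysis. Replacing the $G$-orbit of $\Xi$ by the $G$-orbit of the image of $\phi_*^{-1}Q_j$ in the $d=3$ gluing step is a sensible adaptation, since on $Z$ a contractible $\Xi$ need not exist. (One small logical repair: $5\mid |\aut(Z_\eta)|$ does not follow from faithfulness of $\left<h\right>$ on $\mathrm{Pic}\,Z$; the honest reason is that $G$ itself does not embed into $\mathrm{PGL}(2,\com)$, so the image of $G$ in $\aut(S)$ is cyclic and $\left<h\right>$ lies in the kernel, i.e.\ $h$ acts fiberwise and nontrivially on general fibers.)

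The genuine gap is your $d=4$ case. You handle it only by contracting a negative section $\Xi$ carrying some $C_j$, i.e.\ only when $Z\map S$ is \emph{not} relatively minimal. But $Z$ is the output of a $G$-MMP: what is guaranteed is $\mathrm{rk}\,\mathrm{Pic}^G Z = 2$, and non-equivariantly $Z/S$ may perfectly well be relatively minimal, in which case there is nothing to contract. This case cannot be waved away: degree-$4$ del Pezzo surfaces do admit automorphisms of order $5$ (e.g. $\sum x_i^2=\sum \zeta^i x_i^2=0$ with the cyclic coordinate shift), so your $5$-divisibility test is silent, and \cite{gr-har-st}, \cite{manin-cub-f} do not give rationality in degree $4$ --- minimal degree-$4$ fibrations need not be rational. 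The paper's referenced argument closes exactly this case by the sentence \emph{after} Lemma~\ref{theorem:e-ex-lemma}: in the relatively minimal situation, the faithful action of $\left<h\right>$ on the Picard group (descending from Lemma~\ref{theorem:lin-indep-e-i}) forces either Picard rank $2$ --- impossible, as $\mathrm{GL}(2,\cel)$ has no elements of order $5$ --- or a fiber with $\ge 5$ irreducible components interchanged by $\left<h\right>$ (which acts trivially on the base, hence preserves each fiber), whence $K^2_{\eta}\ge 5$ and rationality after all. You need to reproduce this dichotomy for $Z$; without it the proof is incomplete. Finally, your $\dim S=0$ paragraph is off target: in the paper a $G$-del Pezzo fibration has $\dim S=1$, the $G\ra$-Fano case being the subject of {\ref{subsection:pro-3}} (where the outcome is a contradiction, not rationality read off a classification), so that sketch is both unnecessary for this lemma and, as written, not an argument.
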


\begin{proof}
Argue exactly as in the proof of Lemma~\ref{theorem:e-ex-lemma}.
\end{proof}

\refstepcounter{equation}
\subsection{}
\label{subsection:pro-3}

According to Lemmas~\ref{theorem:q-not-c-b} and
\ref{theorem:q-not-in-e} we may assume from now on that $Z$ is a
$G\ra$-Fano $3$-fold. Note that any two components of the
exceptional locus $E$ of $\psi$ can intersect only along the
fibers (cf. the proof of Lemma~\ref{theorem:e-is-orb}). Also
recall that $\mathrm{rk}\,\mathrm{Pic}\,Y = 11$ by
Lemma~\ref{theorem:lin-indep-e-i} and the subgroup $\left<h\right>
\subset G$ acts faithfully on $\mathrm{Pic}\,Y$. Then it follows
from Remark~\ref{remark:e-i-is-ruled} and
Lemmas~\ref{theorem:can-class-form}, \ref{theorem:e-is-orb} that
either
\begin{equation}
\label{deg-k-z} -K_Z^3 = 4 + 2k(-K_Z \cdot \p^1 + 1)
\end{equation}
for some even $k \le 10$ when $E_i \subset E$ do not intersect, or
\begin{equation}
\label{deg-k-z-1as} -K_Z^3 = 4 + 20(-K_Z \cdot \p^1 + 1 - k')
\end{equation}
for some $k' \le -K_Z \cdot \p^1$ when some $E_i \subset E$
intersect.

\begin{lemma}
\label{theorem:bs-is-emp} The linear system $|-K_Z|$ is
basepoint-free.

\end{lemma}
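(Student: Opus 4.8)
The plan is to establish base-point-freeness of $|-K_Z|$ by leveraging the fact (already recorded in \ref{subsection:pro-2}) that $Z$ is $\ra$-factorial, Gorenstein, terminal, with $-K_Z$ nef and big. For such a $3$-fold the general strategy is to invoke a base-point-free theorem: since $-K_Z$ is nef and big and $Z$ has canonical (indeed terminal) Gorenstein singularities, the divisor $-K_Z = K_Z + (-2K_Z)$ is of the form $K_Z + (\text{nef and big})$, so one would like to apply Kawamata--Shokurov-type base-point-freeness. First I would record that $-mK_Z$ is base-point-free for $m \gg 0$ and then work to push $m$ down to $1$. The cleanest route is to transfer the question back to $X$: recall from Lemma~\ref{theorem:y-is-gor} and the construction that $K_Y = \phi^* K_X$ with $\phi$ small, and the linear system $|-K_X|$ is free (this freeness on $X$ was used already in the proof of Lemma~\ref{theorem:y-is-gor}).

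The key steps, in order, are as follows. First I would note $-K_Y = \phi^*(-K_X)$ is base-point-free because $|-K_X|$ is free and $\phi$ is a morphism, so pullback of a free system is free. Second, since $\psi: Y \map Z$ contracts only the exceptional divisor $E$ onto curves with $K_Y = \psi^* K_Z + E$ (Lemma~\ref{theorem:can-class-form}), I would show that sections of $|-K_Y|$ descend to sections of $|-K_Z|$. Concretely, the general member $S \in |-K_Y|$ is smooth (used in Lemma~\ref{theorem:can-class-form}) and meets each contracted ruling $C$ in $S \cdot C = 1$, so $\psi(S) \in |-K_Z|$ is again a smooth surface and the descended system $|-K_Z|$ separates points away from $\psi(E)$. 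Third, near $\psi(E)$ the surface $Z$ is smooth (again Lemma~\ref{theorem:can-class-form}), and one checks directly that the image system has no base points there by restricting to $\psi(S)$. Putting these together gives that $|-K_Z|$ is free.

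The main obstacle I anticipate is controlling base points precisely along the one-dimensional centers $\psi(E)$: the identity $K_Y = \psi^*K_Z + E$ means $-K_Z$ pulls back to $-K_Y + E$, so freeness of $|-K_Y|$ does not immediately transfer, and I must verify that the base locus of $|-K_Z|$ does not concentrate on the curves $\psi(E)$. This is exactly where the smoothness of $Z$ near $\psi(E)$ and the relation $-K_Y \cdot C = 1$ for contracted rulings become essential: they guarantee that a general $S \in |-K_Y|$ maps isomorphically near a general fiber and that $\psi(S)$ is Cartier and smooth there, so no base point survives. The remaining subtlety is the finitely many special fibers of $\psi$ over $\psi(E) = \p^1$ (see Remark~\ref{remark:e-i-is-ruled}); here I would argue that since $Z$ is terminal Gorenstein with $-K_Z$ nef and big and $\psi(E)$ is a smooth rational curve, a local analysis (or a direct appeal to the base-point-free theorem applied to $-K_Z = K_Z - 2K_Z$) rules out base points on these fibers as well, completing the proof.
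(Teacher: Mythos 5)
Your proposal has a genuine gap, and it sits exactly at the place you flag as the ``main obstacle.'' The descent construction from $Y$ can never prove freeness of $|-K_Z|$ along $\psi(E)$, because the descended linear system is only a \emph{proper} subsystem of $|-K_Z|$ whose base locus is precisely $\psi(E)$. Indeed, for any $S\in|-K_Y|$ and any point $z\in\psi(E)$, the fiber $C=\psi^{-1}(z)\cap E$ is a (union of) ruling(s) with $S\cdot C=-K_Y\cdot C\ge 1>0$, so $S$ meets $C$ and hence $z\in\psi(S)$; thus \emph{every} member $\psi(S)$ of the descended system contains all of $\psi(E)$. A dimension count confirms the system is too small: $h^0(Y,-K_Y)=h^0(X,-K_X)=5$ (as $\phi$ is small and $-K_X=\mathcal{O}_X(1)$), while $h^0(Z,-K_Z)=-K_Z^3/2+3\ge 9$ by Riemann--Roch, since $\psi^*(-K_Z)=-K_Y+E$ rather than $-K_Y$. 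So the sections you would need to kill base points on $\psi(E)$ are exactly the ones your construction does not produce, and smoothness of $Z$ near $\psi(E)$ or of $\psi(S)$ gives no access to them. Your fallback also fails: the Kawamata--Shokurov basepoint-free theorem applied to $-K_Z=K_Z+(-2K_Z)$ only yields that $|-mK_Z|$ is free for $m\gg 0$, and there is no general mechanism to push $m$ down to $1$ --- terminal Gorenstein Fano $3$-folds with base points in $|-K|$ genuinely exist, which is why their classification is a theorem (Jahnke--Radloff \cite{jah-rad}) and not a vanishing statement.

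This is in fact how the paper argues, and the contrast is instructive: the paper assumes $|-K_Z|$ has a base point, invokes the classification of \cite{jah-rad} (using that $-K_Z^3\ge 12$ is divisible by $4$) to conclude that $Z$ would be a $G$-equivariant double cover of the cone over a ruled surface, and then derives $G\subset\pg(2,\com)$, contradicting Lemma~\ref{theorem:not-gl-3}. The group action on $Z$ is the essential ingredient; positivity and the structure of $\psi$ alone, which is all your argument uses, cannot rule out anticanonical base points.
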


\begin{proof}
Assume the contrary. Then it follows from \cite{jah-rad} that $Z$
is a $G$-equivariant double cover of the cone over a ruled surface
(note that according to \eqref{deg-k-z} and \eqref{deg-k-z-1as}
$-K_Z^3\ge 12$ is divisible by $4$). This easily gives
$G\subset\text{PGL}(2,\com)$ and contradiction with
Lemma~\ref{theorem:not-gl-3}.
\end{proof}

\begin{lemma}
\label{theorem:k-z-is-emb} The morphism defined by $|-K_Z|$ is an
embedding.
\end{lemma}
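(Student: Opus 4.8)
The plan is to show that the morphism $\Phi := \Phi_{|-K_Z|}: Z \map \p^N$ (which is already a morphism by Lemma~\ref{theorem:bs-is-emp}) separates points and tangent vectors, thereby promoting the basepoint-freeness to a genuine embedding. Since $Z$ is a $G\ra$-Fano $3$-fold that is $\ra$-factorial, Gorenstein and terminal with nef and big $-K_Z$, and $-K_Z^3 \ge 12$ (see \ref{subsection:pro-3} and Lemma~\ref{theorem:bs-is-emp}), the numerical invariants are large enough that one expects $-K_Z$ to be very ample once base points are gone. First I would invoke the classification/vanishing machinery for Gorenstein terminal Fano $3$-folds: by Kawamata--Viehweg vanishing one has $h^i(Z, -K_Z) = 0$ for $i > 0$, so $\Phi$ maps $Z$ into a projective space of the expected dimension $\frac{1}{2}(-K_Z^3) + 2$, and the image is a Fano $3$-fold of the same (anticanonical) degree. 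The genus $g$ with $-K_Z^3 = 2g-2$ is then $\ge 7$.

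Next I would rule out the two standard ways in which a basepoint-free $|-K_Z|$ can fail to be an embedding: either $\Phi$ is a double cover onto its image (the hyperelliptic case), or $\Phi$ contracts a divisor/has positive-dimensional fibers (the case where the image is lower-dimensional or $-K_Z$ is not birationally very ample). The hyperelliptic case is excluded almost verbatim as in Lemma~\ref{theorem:bs-is-emp}: a $G$-equivariant double cover produces a $G$-action on the base surface forcing $G \subset \mathrm{PGL}(2,\com)$, which contradicts Lemma~\ref{theorem:not-gl-3}. For the case of contracted curves or a birational-but-not-embedding morphism, I would use the standard result (Mukai, Shin, or the Gorenstein-Fano analysis of projective normality) that for a Gorenstein terminal Fano $3$-fold with $|-K|$ basepoint-free and genus $g \ge 3$, $\Phi_{|-K|}$ is either an embedding or a double cover, with the double cover case being exactly the one just excluded. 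The key input is that $-K_Z$ being nef and big together with terminality and the vanishing theorem forces $Z$ to be projectively normal under $\Phi$, so no nontrivial identifications of points can survive outside the hyperelliptic locus.

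The main obstacle, and where I would concentrate the argument, is confirming that $\Phi$ separates tangent vectors along the exceptional locus $\psi(E)$ of the preceding birational contraction, since $Z$ may still carry singularities there or along the $G$-orbits coming from $\mathrm{Sing}\,X$. Here I would lean on Lemma~\ref{theorem:can-class-form} and Lemma~\ref{theorem:com-bir-con}, which guarantee $Z$ is smooth near $\psi(E)$ and that a generic $S \in |-K_Z|$ through a prescribed point is smooth; restricting $|-K_Z|$ to such a smooth anticanonical surface $S$ (a $K3$ surface, by adjunction $K_S = 0$) reduces the separation statement to very-ampleness of $(-K_Z)\big\vert_S$ on a $K3$, which follows from Saint-Donat's theory once the linear system contains no base points and the polarization is not hyperelliptic. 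Combining the $K3$-restriction argument with the exclusion of the hyperelliptic case completes the proof that $\Phi$ is an embedding.
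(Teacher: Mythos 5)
Your proposal is correct and takes essentially the same route as the paper: the paper simply cites \cite[Theorem 1.5]{cps} to conclude that if $|-K_Z|$ fails to embed then $Z$ is a $G$-equivariant double cover of a rational scroll or of a cone over a ruled surface, and then, as in your hyperelliptic exclusion, derives $G\subset\mathrm{PGL}(2,\com)$ and a contradiction with Lemma~\ref{theorem:not-gl-3}. Your additional material (the degree bound, the K3 restriction, Saint-Donat, projective normality) amounts to re-proving that cited classification rather than following a genuinely different path.
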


\begin{proof}
Assume the contrary. Then it follows from \cite[Theorem 1.5]{cps}
that $Z$ is a $G$-equivariant double cover of either a rational
scroll or the cone over a ruled surface. In both cases, arguing
similarly as in the proof of Lemma~\ref{theorem:bs-is-emp}, one
gets contradiction.
\end{proof}

Lemmas~\ref{theorem:bs-is-emp} and \ref{theorem:k-z-is-emb} allow
one to identify $Z$ with its anticanonical model
$Z_{2g-2}\subset\p^{g+1}$ (here $g := -K_Z^3/2 + 1$ is the genus
of $Z$).

\begin{lemma}
\label{theorem:no-g-p-1} There are no $G$-fixed points and
$G$-invariant smooth rational curves on $Z$.
\end{lemma}

\begin{proof}
Firstly, since $G\not\subset\text{GL}(3,\com)$, the group $G$ acts
on $Z$ without smooth fixed points. Also, since $Z$ is
$G$-isomorphic to $X$ near $\mathrm{Sing}\,Z$ by construction, we
obtain that $G$ does not have fixed points on $Z$ at all.

Further, if $\p^1 \subset Z$ is $G$-invariant, then the action
$G\circlearrowright\p^1$ is cyclic, which gives a $G$-fixed point
on $\p^1$, a contradiction.
\end{proof}

\begin{lemma}
\label{theorem:sing-or-rat} $Z$ is singular unless it is rational.
\end{lemma}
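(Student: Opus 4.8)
The plan is to argue by contradiction: assume $Z$ is a smooth $G\ra$-Fano threefold and derive that it must then be rational, so that the only way to avoid forcing rationality is for $Z$ to be singular. The starting point is the identification $Z = Z_{2g-2}\subset\p^{g+1}$ established via Lemmas~\ref{theorem:bs-is-emp} and \ref{theorem:k-z-is-emb}, together with the numerical constraints \eqref{deg-k-z} and \eqref{deg-k-z-1as} on $-K_Z^3$. First I would exploit the fact that $\mathrm{rk}\,\mathrm{Pic}\,Y = 11$ (Lemma~\ref{theorem:lin-indep-e-i}) and that $\psi$ contracts a nonempty $G$-invariant exceptional locus $E$ (Corollary~\ref{theorem:e-is-orb}), so that $\mathrm{rk}\,\mathrm{Pic}\,Z < 11$; combined with the $G$-action and the faithful $\left<h\right>$-action on $\mathrm{Pic}\,Y$, this should pin $\mathrm{rk}\,\mathrm{Pic}^G\,Z$ down to a small value (presumably $1$, since $Z$ is a $G\ra$-Fano). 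For a smooth Fano threefold of Picard rank one, the genus $g$ and degree $-K_Z^3 = 2g-2$ are tightly constrained by the Iskovskikh classification, and I would enumerate which values of $-K_Z^3$ are compatible with the divisibility forced by \eqref{deg-k-z} (namely $-K_Z^3 \equiv 0 \pmod 4$ and $-K_Z^3 \ge 12$, as used in Lemma~\ref{theorem:bs-is-emp}).

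The core of the argument is then to show that each surviving smooth possibility is rational, using the $G$-action to rule out the non-rational Fano threefolds. The key leverage is Lemma~\ref{theorem:not-gl-3}, which says $G\not\subset\mathrm{GL}(3,\com)$: the smallest faithful representation of $G = \mathrm{GA}(1,5)$ is $4$-dimensional, so any smooth Fano $Z$ admitting a faithful $G$-action cannot be too small. I expect the genus-one-through-low-genus cases (those corresponding to quartic double solids, quartics, and similar potentially non-rational Fanos) to be excluded precisely by this representation-theoretic constraint, since a faithful $G$-action on such a $Z$ would induce a faithful $G$-action on a projective space $\p^n$ with $n \le 3$, contradicting Lemma~\ref{theorem:not-gl-3} (compare the argument in Lemma~\ref{theorem:q-not-c-b}). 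For the remaining higher-genus smooth Fano threefolds, rationality is classically known, so those cases directly yield that $Z$ is rational.

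The main obstacle will be controlling the interaction between the smoothness hypothesis and the divisibility arising from the two expressions \eqref{deg-k-z} and \eqref{deg-k-z-1as}: I must verify that the combinatorial constraints on $k$, $k'$, and $-K_Z\cdot\p^1$ really do confine $-K_Z^3$ to the short list of degrees for which the Fano is either manifestly rational or carries no faithful $G$-action. In particular, I would need to check that the values of $-K_Z^3$ allowed by these formulas, under the extra constraint that $-K_Z^3$ be divisible by $4$ and at least $12$, all correspond to rational smooth Fano threefolds once the $G$-equivariance is imposed. The delicate point is that $G$ being a non-normal subgroup of $S_5$ of order $20$ is a rather large symmetry group to impose on a low-degree Fano, and I anticipate that in every smooth case the required faithful action either forces the degree up into the rational range or is outright obstructed by Lemma~\ref{theorem:not-gl-3}; pushing through this case analysis cleanly, rather than through a brute-force tabulation, is where the real work lies.
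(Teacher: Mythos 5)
Your route is genuinely different from the paper's, and it has a gap that cannot be repaired as stated. The paper's proof is a one-liner resting on a cohomological fact you never invoke: if $Z$ is smooth, then $h^{1,2}(Z)=0$, because $Z$ is dominated by a resolution $W$ of $Y$ and $h^{1,2}(W)=0$ by Beauville's defect computation (this is exactly Remark~\ref{remark:e-i-is-ruled}); the classification of smooth Fano threefolds in \cite[\S\S 12.2 -- 12.6]{isk-pro} then shows that every smooth Fano threefold with $h^{1,2}=0$ is rational, and we are done. This single vanishing uniformly eliminates all non-rational smooth candidates at once, with no need to track degrees, Picard ranks, or the $G$-action. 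That vanishing is the missing idea in your proposal.

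The concrete failure point in your plan is the claimed exclusion mechanism via Lemma~\ref{theorem:not-gl-3}. That lemma only obstructs \emph{linear} actions on $\com^3$; since $G=\mathrm{GA}(1,5)$ has a faithful irreducible $4$-dimensional representation and trivial center, $G$ embeds into $\mathrm{GL}(4,\com)$ and hence into $\mathrm{PGL}(4,\com)$, so a faithful $G$-action on $\p^3$ (or $\p^4$) is no contradiction at all --- indeed the paper's own $X$ is a $G$-invariant quartic threefold in $\p^4$. As a result your case analysis cannot be closed: for instance, a smooth quartic double solid (double cover of $\p^3$ branched in a smooth $G$-invariant quartic surface, which exists by restricting the $S_5$-invariant pencil to $\sum x_i=0$) has $-K^3=16$, which is divisible by $4$, is $\ge 12$, and is realized by \eqref{deg-k-z} with $k=2$ and $-K_Z\cdot\p^1=2$; it carries a faithful $G$-action, passes every numerical filter you propose, and is non-rational (it has $h^{1,2}=10$). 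Your fallback claim that ``higher-genus smooth Fano threefolds are classically rational'' is also false without the $h^{1,2}=0$ hypothesis: the general $V_{12}$, $V_{16}$, $V_{18}$ of Picard rank one are all non-rational. So degree bookkeeping plus Lemma~\ref{theorem:not-gl-3} cannot substitute for the vanishing $h^{1,2}(Z)=0$ (or an equivalent intermediate Jacobian argument), which is what actually carries the paper's proof.
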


\begin{proof}
Suppose that $Z$ is smooth. Then rationality of $Z$ follows from
the fact that $h^{1,2}(Z)=0$ (see
Remark~\ref{remark:e-i-is-ruled}) and \cite[\S\S 12.2 --
12.6]{isk-pro}.
\end{proof}

According to Lemmas~\ref{theorem:sing-or-rat},
\ref{theorem:no-of-sings-y} and
Proposition~\ref{theorem:com-bir-con} we may reduce to the case
when $|\mathrm{Sing}\,Z| = |\mathrm{Sing}\,Y| = 10$, with the
locus $\mathrm{Sing}\,Z$ being some $G$-orbit.

\begin{lemma}
\label{theorem:x-s-plt} Let $S \in |-K_Z|$ be a $G$-invariant
hyperplane section such that $S\cap\mathrm{Sing}\,Z\ne\emptyset$.
Then the pair $(Z,S)$ is plt.
\end{lemma}

\begin{proof}
Lemma~\ref{theorem:no-g-p-1} and the proof of \cite[Lemma
4.6]{pro-sim-gr} show that the pair $(Z,S)$ is log canonical.
Moreover, if $(Z,S)$ is not plt, then the same argument as in {\it
loc.cit} reduces the claim to the case when $S$ is a ruled surface
over an elliptic curve, say $B$. On the other hand, since
$|S\cap\mathrm{Sing}\,Z| = 10$, we get either
$G\subset\text{PGL}(2,\com)$ or a faithful $G$-action on $B$, a
contradiction.
\end{proof}

We are ready to prove the following:

\begin{prop}
\label{theorem:g-is-small} $g \le 9$.
\end{prop}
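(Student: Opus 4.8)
We want to bound the genus $g = -K_X^3/2 + 1$ of the anticanonical model $Z_{2g-2}\subset\p^{g+1}$ by $9$.  The strategy is to extract the maximal amount of numerical information about $Z$ from the constraints already at hand and to play the two possible degree formulas \eqref{deg-k-z} and \eqref{deg-k-z-1as} against the geometry.  First I would record the hard facts: $Z$ is a $G\ra$-Fano which, by Lemmas~\ref{theorem:bs-is-emp} and \ref{theorem:k-z-is-emb}, is anticanonically embedded; by Lemma~\ref{theorem:sing-or-rat} (together with Lemmas~\ref{theorem:no-of-sings-y}, \ref{theorem:com-bir-con}) we may assume $\mathrm{Sing}\,Z$ is exactly the $G$-orbit $G\cdot o'$ of length $10$, and these are Gorenstein terminal (hence $cDV$) points.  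Since $-K_Z$ is nef and big and $h^{1,2}(Z)=0$ (Remark~\ref{remark:e-i-is-ruled}), the general anticanonical surface $S\in|-K_Z|$ is a K3 with at worst $Du\,Val$ singularities, and the intersection $-K_Z^3 = 2g-2$ equals the degree of $S$ in $\p^g$.

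The key step is the following.  A smooth member $S\in|-K_Z|$ must pass through the singular points of $Z$ as $Du\,Val$ points, and the $G$-action permutes the $10$ points of $\mathrm{Sing}\,Z$ transitively.  I would examine how $\langle h\rangle\simeq\f_5$ acts on this length-$10$ orbit: since $\f_5$ has order $5$, its orbits on a $10$-point set have sizes summing to $10$ with each size in $\{1,5\}$, so either all $10$ points are fixed by $h$ or they split as $5+5$ (up to fixed points).  Combined with the faithful action of $\langle h\rangle$ on $\mathrm{Pic}\,Y$ (hence a non-trivial action downstairs) and with $G\not\subset\mathrm{GL}(3,\com)$ (Lemma~\ref{theorem:not-gl-3}), this pins down enough of the representation of $G$ on $H^0(-K_Z)=\com^{g+2}$ to force $g+2$ to decompose into $G$-irreducibles of dimensions $1$ and $4$ only.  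Because the only such dimensions available are $1$ and $4$, one obtains strong congruence restrictions on $g+2$, and in particular $g+2$ cannot be too large before the embedding $Z\hookrightarrow\p^{g+1}$ with a $10$-point singular orbit becomes incompatible with the classification of Gorenstein $G\ra$-Fano threefolds of high genus.

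Concretely I would proceed by elimination on the degree.  The two formulas give $-K_Z^3\equiv 4\pmod 4$ with $-K_Z^3\ge 12$ (as used in Lemma~\ref{theorem:bs-is-emp}); writing $g = -K_Z^3/2+1$, a genus $g\ge 10$ means $-K_Z^3\ge 18$, i.e. $-K_Z^3\in\{20,24,\dots\}$ compatible with the divisibility by $4$.  For each such value I would show that the required presence of a $10$-point $G$-orbit of terminal (equivalently, $cDV$) points on an anticanonically embedded $Z_{2g-2}$ forces either a non-faithful $\langle h\rangle$-action on $\mathrm{Pic}\,Y$ (contradicting the faithfulness noted before \eqref{deg-k-z}) or a $G$-invariant hyperplane section geometry incompatible with Lemma~\ref{theorem:not-gl-3}; the argument is the same ``orbit-on-a-low-dimensional-object'' device used repeatedly (Lemmas~\ref{theorem:q-not-c-b}, \ref{theorem:e-ex-asa}).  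The upshot is that $g\le 9$.

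The main obstacle is the middle step: controlling the $G$-representation on $H^0(-K_Z)$ precisely enough.  Because $Z$ is only $G\ra$-factorial and singular, one cannot simply read off the linearisation from a smooth model, and the interaction between the $10$-point singular orbit and the decomposition of $\com^{g+2}$ into $G$-modules of dimension $1$ and $4$ is delicate.  The expected resolution is to use that $\langle h\rangle$ acts faithfully on $\mathrm{Pic}\,Y\simeq\cel^{11}$ (so in particular non-trivially on the relevant anticanonical data), which rules out the degenerate linearisations and leaves only finitely many numerical possibilities, all satisfying $g\le 9$.
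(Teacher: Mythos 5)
There is a genuine gap, and it sits exactly where you yourself flag ``the main obstacle.'' Your engine is the claim that decomposing $H^0(Z,-K_Z)=\com^{g+2}$ into $G$-irreducibles of dimensions $1$ and $4$ yields ``strong congruence restrictions'' on $g+2$. But this restriction is vacuous: $G=\mathrm{GA}(1,5)$ has exactly four $1$-dimensional irreducible representations and one $4$-dimensional one (Lemma~\ref{theorem:not-gl-3} is proved from this very fact), so \emph{every} $G$-representation decomposes into pieces of dimensions $1$ and $4$, and every integer $n\ge 0$ is of the form $a+4b$ with $a,b\ge 0$. No value of $g$ is excluded. The subsequent ``elimination on the degree'' is then never actually performed: for a given $-K_Z^3\ge 18$ you do not produce any concrete low-dimensional object (a curve, a pencil, a fibration base) on which $G$ would be forced to act faithfully, so the ``orbit-on-a-low-dimensional-object device'' of Lemmas~\ref{theorem:q-not-c-b} and \ref{theorem:e-ex-asa} has nothing to bite on. Note also that your parity bookkeeping is off: \eqref{deg-k-z-1as} gives $-K_Z^3\equiv 2k'\pmod 4$, so divisibility by $4$ is not automatic in that case. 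Most importantly, your scheme contains no step at which the hypothesis $g>9$ is actually used, so it cannot terminate in the bound $g\le 9$.

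The paper's proof uses $g>9$ in one precise place, and this is the idea your proposal is missing: the $10$ points of $\mathrm{Sing}\,Z$ span a linear subspace of dimension $\le 9$ in $\p^{g+1}$, so once $g>9$ there exists a $G$-invariant hyperplane section $S\in|-K_Z|$ meeting $\mathrm{Sing}\,Z$. One then shows (via the absence of $G$-fixed points and of $G$-invariant smooth rational curves on $Z$, Lemma~\ref{theorem:no-g-p-1}) that $(Z,S)$ is plt, so $S$ is a normal surface with canonical singularities; its minimal resolution is a K3 carrying a $G$-invariant set of $10$ disjoint $(-2)$-curves $C_i$ over $\mathrm{Sing}\,Z$. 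The contradiction is then lattice-theoretic, not representation-theoretic: $\tau^2$ acts symplectically on $S$, the quotient $S/\langle\tau^2\rangle$ has canonical singularities, each image curve $\tilde C_i$ has $(\tilde C_i^2)=-1$ and hence meets $\mathrm{Sing}\,(S/\langle\tau^2\rangle)$ in two points, so the minimal resolution of the quotient contains $\ge 20$ disjoint $(-2)$-curves --- impossible since $h^{1,1}=20$ (the Picard part of the K3 lattice has signature $(1,19)$, so it cannot contain a rank-$20$ negative definite sublattice). Your sketch correctly assembles the standing hypotheses ($\mathrm{Sing}\,Z=G\cdot o'$ of length $10$, anticanonical embedding, $\langle h\rangle$ faithful on $\mathrm{Pic}\,Y$), but without the invariant-elephant construction and the K3 quotient argument (or some substitute that genuinely consumes $g>9$), the proposal does not prove the statement.
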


\begin{proof}
Let $g > 9$. Note that the linear span of any $G$-orbit in
$\mathrm{Sing}\,Z$ has dimension $\le 9$. Hence we can consider a
$G$-invariant hyperplane section $S \in |-K_Z|$ such that
$S\cap\mathrm{Sing}\,Z\ne\emptyset$.

It follows from Lemma~\ref{theorem:x-s-plt} and \cite[Corollary
3.8]{sho} that $S$ is either normal or reducible. But in the
latter case, $-K_Z \sim [\text{some disconnected surface}]$
because $(Z,S)$ is plt, which is impossible.

Thus the surface $S$ is normal with at most canonical
singularities. Let us identify $S$ with its $G$-equivariant
minimal resolution. In particular, we may assume that $S$ contains
a $G$-invariant collection of \emph{disjoint} $(-2)$-curves
$C_i,1\le i\le 10$.

From $G\subseteq\text{Aut}\,S$ one obtains a $G$-action on the
space $H^{2,0}(S) = \com[\omega_S]$. In particular, the subgroup
$\left<\tau^2\right>\subset G$ preserves the $2$-form $\omega_S$,
which implies that the quotient $S_{\tau} :=
S\slash\left<\tau^2\right>$ has at most canonical singularities.
Note also that $\tau^2(C_i) = C_i$ and $h(C_i)\ne C_i$ for all
$i$.

Let $\tilde{C}_i$ be the image of $C_i$ on $S_{\tau}$. We have
$|\tilde{C}_i\cap\mathrm{Sing}\,S_{\tau}| = 2$ for all $i$ because
$(\tilde{C}_i^2) = -1$ by the projection formula. Then for the
minimal resolution $S'_{\tau}$ of $S_{\tau}$ we obtain that
$S'_{\tau}$ contains $\ge 20$ disjoint $(-2)$-curves. This
contradicts $h^{1,1}(S'_{\tau}) = 20$ and finishes the proof of
Proposition~\ref{theorem:g-is-small}.
\end{proof}

According to Proposition~\ref{theorem:g-is-small} and
\eqref{deg-k-z}, \eqref{deg-k-z-1as} we may assume that
$-K_Z^3\in\{12,16\}$. (Note that the case $k = 10$ yields
$\mathrm{rk}\,\mathrm{Pic}\,Z = 1$ and can be excluded exactly as
in the proof of Proposition~\ref{theorem:pic-z-or-rat-1} below.)

\begin{remark}
\label{remark:k-not-16} Suppose $Z = Z_{16}\subset\p^{10}$. Then,
since the projective $G$-action is induced from the linear one on
$\com^{11} = H^0(Z,-K_Z)$, one gets a pencil on $Z$ consisting of
$G$-invariant hyperplane sections. In particular, there is such
$S$ intersecting $\text{Sing}\,Z$, so that the arguments in the
proof of Proposition~\ref{theorem:g-is-small} apply to exclude the
case $-K_Z^3=16$.
\end{remark}

\refstepcounter{equation}
\subsection{}
\label{subsection:pro-4}

It follows from Lemma~\ref{theorem:com-bir-con} and
\cite{namikawa} that there is a $1$-parameter family $s:
\mathcal{Z}\map\Delta$ over a small disk $\Delta\subset\com$ of
smooth Fano $3$-folds $Z_t,t\ne 0$, deforming to $Z_0 = Z$. Since
$H^i(Z_t,nK_{Z_t})=0$ for all $n\le 0,i>1$ and $t$, we deduce that
the sheaf $s_*(-K_{\mathcal{Z}})$ is locally free.

Further, the cone $\overline{NE}(Z)$ is polyhedral, with
contractible extremal rays (cf. the proof of
Lemma~\ref{theorem:psi-bir-1}). Let $H$ be a nef divisor on $Z$
that determines one of these contractions. Then \cite{jah-rad-pet}
and \cite[Proposition 1.4.13]{laz} imply that $H$ varies in the
family $H_t$ of nef divisors on $Z_t$.

\begin{prop}
\label{theorem:pic-z-or-rat} $\mathrm{rk}\,\mathrm{Pic}\,Z \ne 2$.
\end{prop}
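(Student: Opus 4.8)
The plan is to show that if $\mathrm{rk}\,\mathrm{Pic}\,Z = 2$, then $Z$ is rational, so that the only surviving case is $\mathrm{rk}\,\mathrm{Pic}\,Z = 1$ (which is handled elsewhere). Under the standing assumptions of \ref{subsection:pro-3}, the threefold $Z$ is a $G\ra$-Fano with $-K_Z^3 = 12$ (by Remark~\ref{remark:k-not-16}), it is embedded anticanonically as $Z_{12} \subset \p^{8}$ (genus $g = 7$), and $|\mathrm{Sing}\,Z| = 10$ is a single $G$-orbit. If $\mathrm{rk}\,\mathrm{Pic}\,Z = 2$, the cone $\overline{NE}(Z)$ has exactly two extremal rays. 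One expects that at least one of these rays gives a contraction that either produces a rational Mori fibration or lowers the problem to an already-settled case.

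**First I would** analyze the two extremal rays of $\overline{NE}(Z)$. If both rays are birational (small, since $Z$ is $\ra$-factorial terminal Gorenstein with nef and big $-K_Z$), one runs the $G$-equivariant MMP and applies Lemmas~\ref{theorem:q-not-c-b}, \ref{theorem:q-not-in-e} together with Corollary~\ref{theorem:e-is-orb} to reach a $G\ra$-Fano of Picard rank $1$, contradicting the assumption unless rationality is forced. The more interesting situation is when one extremal contraction $\varphi: Z \map S$ is a $G$-Mori fibration. As in the proof of Proposition~\ref{theorem:e-ex} and Lemma~\ref{theorem:q-not-c-b}, the conic bundle case is excluded by the faithful $G$-action it would induce on the generic fiber $\p^1$ (contradicting Lemma~\ref{theorem:not-gl-3}), while the del Pezzo fibration case forces rationality by Lemma~\ref{theorem:q-not-in-e}. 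Thus the entire argument reduces to showing that, in the purely birational Picard-rank-$2$ situation, one cannot remain on a $G\ra$-Fano of rank $1$ with the same singularity data $|\mathrm{Sing}| = 10$.

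**The key step** is to use the $10$-point $G$-orbit $\mathrm{Sing}\,Z$ as an obstruction, exactly as in Proposition~\ref{theorem:g-is-small}. Since $G \not\subset \mathrm{GL}(3,\com)$ forces $G$ to act on $Z$ without fixed points, and since the linear span of the $10$-point orbit lives inside $\p^{8}$, I would cut $Z$ with a $G$-invariant hyperplane section $S \in |-K_Z|$ meeting $\mathrm{Sing}\,Z$. The pair $(Z,S)$ is plt by the argument of Lemma~\ref{theorem:x-s-plt}, so $S$ is normal with canonical singularities; passing to its minimal resolution and quotienting by $\left<\tau^2\right>$ yields (via Lemma~\ref{theorem:many-sings-on-s}) a surface $S'_{\tau}$ with at least $20$ disjoint $(-2)$-curves, contradicting $h^{1,1}(S'_{\tau}) = 20$. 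This shows that a rank-$2$ birational configuration cannot descend to a rank-$1$ $G\ra$-Fano carrying the same $10$-point singular orbit, which is exactly what rank $2$ would require once both extremal rays are birational.

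**The main obstacle** I anticipate is handling the mixed case cleanly, namely when one extremal ray is a divisorial (or small) birational contraction and the other is a fibration, since then the relation \eqref{deg-k-z} or \eqref{deg-k-z-1as} must be combined with the constraint $-K_Z^3 = 12$ to pin down which geometric type of $Z$ can actually occur at rank $2$. The delicate point is ensuring that the $\left<h\right>$-faithfulness on $\mathrm{Pic}\,Z$ (forcing orbits of length $\ge 5$) is compatible with only two extremal rays; an orbit of contractible rays of length $\ge 5$ would already push $\mathrm{rk}\,\mathrm{Pic}\,Z$ above $2$ unless the two rays are $G$-invariant individually, which then feeds back into the fixed-point and $\mathrm{PGL}(2,\com)$ obstructions of Lemmas~\ref{theorem:not-gl-3} and \ref{theorem:no-g-p-1}. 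I expect this numerical bookkeeping, rather than any single geometric input, to be the crux of the proof.
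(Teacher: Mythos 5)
Your proposal has two genuine gaps, and both are fatal to the strategy as outlined. The first is an equivariance failure. Since $Z$ is a $G\ra$-Fano, $\mathrm{rk}\,\mathrm{Pic}^G\,Z = 1$; so if $\mathrm{rk}\,\mathrm{Pic}\,Z = 2$, the two extremal rays of $\overline{NE}(Z)$ cannot both be $G$-invariant (that would force $\mathrm{rk}\,\mathrm{Pic}^G\,Z = 2$) — they are \emph{interchanged} by $G$. Consequently neither extremal contraction is $G$-equivariant: there is no further $G$-MMP to run on $Z$, and Lemmas~\ref{theorem:q-not-c-b} and \ref{theorem:q-not-in-e}, which concern $G$-\emph{equivariant} conic bundles and del Pezzo fibrations, simply do not apply to these contractions. (This is exactly the configuration realized by the divisor of bidegree $(2,2)$ in $\p^2\times\p^2$, whose two conic bundle projections are swapped by $G$.) Your closing remark that the rays must be ``$G$-invariant individually'' inverts the actual situation; note also that $\left<h\right>$-faithfulness is established in the paper only on $\mathrm{Pic}\,Y$, not on $\mathrm{Pic}\,Z$, so no orbit-length constraint on the rays of $\overline{NE}(Z)$ is available.

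The second gap is in your ``key step''. The argument of Proposition~\ref{theorem:g-is-small} requires a $G$-invariant section $S\in|-K_Z|$ with $S\cap\mathrm{Sing}\,Z\ne\emptyset$, and its existence there rests on the hypothesis $g > 9$: the span of the $10$-point orbit has dimension $\le 9$, hence is a proper subspace of $\p^{g+1}$, through which an invariant hyperplane can be found. In the rank-two case one has $-K_Z^3 = 12$, i.e. $g = 7$ and $Z\subset\p^8$, where the $10$ singular points may span all of $\p^8$; representation theory of $G$ on $\com^9$ guarantees at most one semi-invariant hyperplane, and nothing forces it to meet $\mathrm{Sing}\,Z$. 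So the K3-surface contradiction ($\ge 20$ disjoint $(-2)$-curves versus $h^{1,1}=20$) cannot be launched. The paper's actual proof is entirely different: it smooths $Z$ inside a one-parameter family (Namikawa), deforms the nef divisors defining the contractions along the family, observes that the two contractions on each fiber must be of the same type (again because $\mathrm{rk}\,\mathrm{Pic}^G\,Z=1$), pins $Z$ down via the classification in \cite[\S 12.3]{isk-pro} as a $(2,2)$-divisor in $\p^2\times\p^2$, and then shows by an explicit computation with the $\left<h,\tau^2\right>$-symmetric equation that such a divisor is smooth — contradicting $|\mathrm{Sing}\,Z| = 10$. None of these ingredients (deformation, classification, explicit equation) appears in your proposal, and without them the rank-two case remains open.
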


\begin{proof}
Assume the contrary. It follows from the condition
$\mathrm{rk}\,\mathrm{Pic}^G\,Z = 1$ that both of the extremal
contractions on each $Z_t$ above must be either birational or Mori
fibrations. Now \cite[\S 12.3]{isk-pro} and
Remark~\ref{remark:k-not-16} show that $Z$ can only be a divisor
in $\p^2\times\p^2$ of bidegree $(2,2)$. Let us show that $Z$ is
smooth (this will contradict $|\mathrm{Sing}\,Z| = 10$ and prove
Proposition~\ref{theorem:pic-z-or-rat}).

Let $x_i$ (resp. $y_i$) be coordinates on the first (resp. second)
factor of $\p^2\times\p^2$. Let also $f(x,y) = 0$ be the equation
of $Z$ (so that it defines a conic in $\p^2$ whenever $x :=
[x_0:x_1:x_2]$ or $y$ is fixed).

Note that projections to the $\p^2$-factors induce conic bundle
structures on $Z$. These are interchanged by $G$ (because of
$\mathrm{rk}\,\mathrm{Pic}^G\,Z = 1$) and are
$\left<h,\tau^2\right>$-invariant. One may assume that
$\mathrm{Sing}\,Z$ belongs to the affine chart $x_0=y_0=1$ on
$\p^2\times\p^2$. Then, after a coordinate change, we obtain that
$f(x,y) = x_1x_2y_1y_2 + x_1x_2 + y_1y_2 + 1$ in this chart, with
$h$ acting diagonally on $x_i$ and $y_i$.

Now, differentiating $f(x,y)$ by $x_1,x_2$ we get $x_i = -y_1y_2$,
and similarly $y_i = -x_1x_2$. This gives $x_1 = x_2,y_1 =
y_2\in\{-1,-w\}$, which contradicts $f(x,y) = 0$.
\end{proof}

Note that there is a $G$-invariant surface $S \in |-K_Z|$, since
$\p^8 = \p(\com^9)\supset Z$, similarly as in
Remark~\ref{remark:k-not-16}.

\begin{lemma}
\label{theorem:x-s-plt-1} The pair $(Z,S)$ is plt.
\end{lemma}

\begin{proof}
As in the proof of Lemma~\ref{theorem:x-s-plt}, it suffices to
exclude the case when (the normalization of) the surface $S$ is
ruled, over some base curve $B$ of genus $\le 1$.

Note that any line $L$ passing through two points from
$\text{Sing}\,Z$ is contained in $Z$ (as $Z$ is an intersection of
quadrics). In particular, we have $S \cdot L > 0$ for $>10$ of
such $L$, which yields either $G\subset\text{PGL}(2,\com)$ or a
faithful $G$-action on $B$, a contradiction.
\end{proof}

\begin{prop}
\label{theorem:pic-z-or-rat-1} $\mathrm{rk}\,\mathrm{Pic}\,Z \ne
1$.
\end{prop}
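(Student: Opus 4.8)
The plan is to rule out $\mathrm{rk}\,\mathrm{Pic}\,Z = 1$ by deriving a contradiction from the detailed geometry of $Z$ accumulated so far. By Proposition~\ref{theorem:g-is-small} and Remark~\ref{remark:k-not-16} we already know $-K_Z^3 = 12$, so $Z = Z_{10}\subset\p^7$ is a $G\ra$-Fano threefold of genus $7$, embedded anticanonically by Lemmas~\ref{theorem:bs-is-emp} and~\ref{theorem:k-z-is-emb}, with $|\mathrm{Sing}\,Z| = 10$ and $\mathrm{Sing}\,Z$ a single $G$-orbit. The rank-one hypothesis means $\mathrm{Pic}\,Z = \cel\cdot(-K_Z)$, so $Z$ is a genuine $\ra$-factorial terminal Fano threefold of rank $1$; I would exploit the tension between the large symmetry group $G = \mathrm{GA}(1,5)$ of order $20$ acting on a \emph{rank-one} Fano, the presence of exactly $10$ singular points, and the available classification of such Fanos of small genus.

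\textbf{Key steps.} First I would invoke the classification of $\ra$-factorial terminal rank-one Fano threefolds of genus $7$ (anticanonical degree $12$), as in \cite[\S12.2--12.6]{isk-pro}, to pin down the possible deformation/model types of $Z$; smoothness is already excluded by Lemma~\ref{theorem:sing-or-rat} together with $|\mathrm{Sing}\,Z| = 10$. Second, following the deformation strategy of Proposition~\ref{theorem:pic-z-or-rat}, I would smooth $Z$ in a one-parameter family $s:\mathcal{Z}\map\Delta$ of Fano threefolds (using \cite{namikawa} and Lemma~\ref{theorem:com-bir-con}), so that a generic fiber $Z_t$ is a smooth rank-one Fano of genus $7$; since $h^{1,2}(Z) = 0$ by Remark~\ref{remark:e-i-is-ruled}, the fibers $Z_t$ are rational by \cite[\S12.3]{isk-pro}, and I would argue that rationality of the generic smooth fiber forces rationality of the special fiber $Z$ via the equivariant birational geometry already set up. Third, and as the principal device, I would re-run the argument of Proposition~\ref{theorem:g-is-small}: the linear span of the length-$10$ orbit $\mathrm{Sing}\,Z$ inside $\p^7$ has dimension $\le 7$, so a $G$-invariant hyperplane section $S\in|-K_Z|$ meeting $\mathrm{Sing}\,Z$ exists, and the plt/canonical analysis via Lemmas~\ref{theorem:x-s-plt} and~\ref{theorem:many-sings-on-s} yields a minimal resolution $S'_\tau$ carrying $\ge 20$ disjoint $(-2)$-curves against $h^{1,1}(S'_\tau) = 20$, exactly as before.

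\textbf{The main obstacle.} The delicate point is that in rank one we can no longer appeal to a second extremal contraction, so the clean structural dichotomy used in Proposition~\ref{theorem:pic-z-or-rat} (both contractions birational or Mori fibrations, forcing the $(2,2)$-divisor in $\p^2\times\p^2$) is unavailable; instead the whole argument must be powered by the combination of the fixed-point-free $G$-action (established via Lemma~\ref{theorem:no-g-p-1} and $G\not\subset\mathrm{GL}(3,\com)$) and the precise count $|\mathrm{Sing}\,Z| = 10$. The hard part will be ensuring that the $G$-invariant hyperplane section $S$ genuinely meets all ten singular points in the required way and that the quotient $S_\tau = S\slash\left<\tau^2\right>$ has the expected canonical singularities, so that the Hodge-theoretic bound $h^{1,1}(S'_\tau) = 20$ applies verbatim; once that is secured, the contradiction with $\ge 20$ disjoint $(-2)$-curves closes the case and hence, since every remaining alternative has been shown to imply rationality of $Z$ (and therefore of $X$), completes the proof.
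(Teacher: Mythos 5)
Your ``principal device'' --- re-running the argument of Proposition~\ref{theorem:g-is-small} --- has a genuine gap, and it is precisely the gap that forces the paper to argue differently in the rank-one case. That argument needs a $G$-invariant member of $|-K_Z|$ containing the ten nodes. For $g>9$ such a member exists because the ten points of $\mathrm{Sing}\,Z$ span at most a $\p^9$, which is then a \emph{proper} subspace of $\p^{g+1}$; for $g=9$, Remark~\ref{remark:k-not-16} produces a \emph{pencil} of $G$-invariant hyperplane sections, and a pencil always has a member through a given node. For $g=7$ neither mechanism is available: ten points in $\p^8$ need not lie in any hyperplane at all (your bound ``$\le 7$'' on the span is unjustified --- ten points can span a $\p^9$), and since the irreducible representations of $G=\mathrm{GA}(1,5)$ have dimensions $1,1,1,1,4$, the decomposition of $\com^9=H^0(Z,-K_Z)$ may well be $4+4+1$, so only \emph{one} invariant hyperplane section $S$ is guaranteed, with no way to force $S\cap\mathrm{Sing}\,Z\ne\emptyset$. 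Accordingly the paper never assumes $S\supset\mathrm{Sing}\,Z$: it proves that $(Z,S)$ is plt by a different argument (lines joining pairs of nodes lie on $Z$ because $Z$ is an intersection of quadrics), then uses \eqref{deg-k-z} with $-K_Z^3=12$ to conclude that $\psi$ contracts exactly two disjoint surfaces onto two lines $L_1,L_2\subset Z$, spanning a $G$-invariant $\p^3$ with $Z\cap\p^3=L_1\cup L_2$ (so that $X$ is the projection of $Z$ from this $\p^3$), reduces to the case $L_1\cup L_2\subset S$, and reaches a contradiction on the quotient $S_h=S/\left<h\right>$, where the image of the $(-2)$-curve $L_1$ would have self-intersection $-2/5$. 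None of these ingredients (the lines $L_i$, the projection structure, the $\left<h\right>$-quotient) appears in your proposal, and without them your K3-quotient count of $(-2)$-curves never gets started.

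Your second step is also flawed. The vanishing $h^{1,2}=0$ of Remark~\ref{remark:e-i-is-ruled} holds for a \emph{resolution} of the singular $Z$, not for its Namikawa smoothing $Z_t$: smoothing the ten nodes raises $h^{1,2}$, and a smooth prime Fano threefold of genus $7$ has $h^{1,2}=7\ne 0$, so the ``$h^{1,2}=0\Rightarrow\text{rational}$'' reading of \cite[\S\S 12.2--12.6]{isk-pro} does not apply to $Z_t$ (and such threefolds are not rational in general); the subsequent specialization of rationality from $Z_t$ to $Z$ would in any case require \cite{deF}-type results, not ``the equivariant birational geometry already set up.'' Note also that even if repaired, this step would prove ``$\mathrm{rk}\,\mathrm{Pic}\,Z=1$ implies $Z$ rational,'' which is weaker than the Proposition as stated --- the paper derives an outright contradiction. (A deformation argument along your lines \emph{could} be salvaged by comparing Hodge numbers instead of rationality: since $Z$ is $\ra$-factorial with ten nodes and $h^{1,2}$ of its resolution is $0$, one gets $h^{1,2}(Z_t)=10$, incompatible with the value $7$ forced by the classification of smooth prime Fanos of genus $7$; but that is not the argument you gave.) Finally, the numerology is off: genus $7$ means $Z=Z_{12}\subset\p^8$, not $Z_{10}\subset\p^7$.
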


\begin{proof}
Assume the contrary. Then $Z_t\subset\p^8$ from the beginning of
{\ref{subsection:pro-4}} are Fano $3$-folds of the principal
series.

It follows from Lemma~\ref{theorem:x-s-plt-1} that $S$ is normal
and connected. Further, we have $k \le 2$ and $-K_Z\cdot\p^1\le 2$
in \eqref{deg-k-z}, which means (cf. Lemma~\ref{theorem:no-g-p-1})
that the exceptional locus of $\psi: Y\map Z$ consists of two
disjoint surfaces, say $E_1,E_2$, so that $L_i := \psi(E_i)$ are
two lines on $Z$. In particular, there is a $G$-invariant subspace
$\p^3\subset\p^8$, with $Z\cap\p^3 = L_1 \cup L_2$, such that $X$
is obtained from $Z$ via the linear projection from $\p^3$ (recall
that both $X$ and $Z$ are anticanonically embedded).

We may assume that $Z\cap\p^3\subset S$ (otherwise there is a
pencil as in Remark~\ref{remark:k-not-16}). Hence $S$ contains the
$(-2)$-curve $L_1$ (we have identified $S$ with its minimal
resolution). Note that $L_1$ is preserved by the group
$\left<h\right>$.

Consider the quotient $S_h := S\slash\left<h\right>$. Then the
image of $L_1$ on $S_h$ has self-intersection $=-2/5$ by the
projection formula. On the other hand, this self-intersection
$\in\cel[0.5]$ (for $S_h$ has at most canonical singularities due
to $h^*(\omega_{S_h}) = \omega_{S_h}$), a contradiction.

Proposition~\ref{theorem:pic-z-or-rat-1} is completely proved.
\end{proof}

It follows from Propositions~\ref{theorem:pic-z-or-rat} and
\ref{theorem:pic-z-or-rat-1} that $\mathrm{rk}\,\mathrm{Pic}\,Z >
2$. Now, since $-K_Z^3 = 12$, from \cite{namikawa},
\cite{jah-rad-pet} and \cite[\S\S 12.4 -- 12.6]{isk-pro} we obtain
that $Z$ is a deformation of $Z_t = $ either $\p^1\times[\text{del
Pezzo surface of degree 2}]$ or a double cover of
$\p^1\times\p^1\times\p^1$, ramified along a divisor of tridegree
$(2,2,2)$. In both cases, $Z$ is hyperelliptic (cf. the beginning
of the proof of Proposition~\ref{theorem:pic-z-or-rat}), which
contradicts Lemma~\ref{theorem:k-z-is-emb}.

The proof of Theorem~\ref{theorem:main} is finished.

\bigskip

\section{Concluding discussion}
\label{section:con}

\refstepcounter{equation}
\subsection{}
\label{subsection:con-1}

Equations \eqref{s-6-qua} and the results of \cite{cynk} show that
any $S_6$-invariant quartic $X_t$ is not $\ra$-factorial. In turn,
as we saw in Section~\ref{section:class}, it is indispensable to
compute the group $\text{Cl}\,X_t = H_4(X_t,\cel)$ (e.g. for the
arguments of Section~\ref{section:pro} to carry on).

This amazing interrelation between topology and (birational)
geometry of $X_t$ provides one with a hint for studying the
birational type of $X_t$ by ``topological" means. In this regard,
let us give a sketch of an argument, showing that $X_t$ is
unirational for generic $t\in\mathbb{R}$, hence for (again
generic) $t\in\com$ (cf. \cite[Proposition 2.3]{deF}).

Namely, differentiating \eqref{s-6-qua} one interprets this system
of equations as the graph of a \emph{Morse function}
$F:\mathbb{R}\p^4\map\mathbb{R}$, so that
$X_t^{\mathbb{R}}=F^{-1}(t)$ are smooth level sets for
$t\not\in\{\infty,0,10/7,2,4,6\}$, while the rest of
$t\not\in\{0,4\}$ correspond to critical level sets of (maximal)
index $3$ (here $X_t^{\mathbb{R}}$ denotes the real locus of
$X_t$).

We may replace $\mathbb{R}\p^4$ with its universal cover $S^4$.
Then $F$ lifts to a Morse function on $S^4$ and thus all smooth
$X_t^{\mathbb{R}}$ are homotopy $\mathbb{R}\p^3$. Actually generic
$X_t^{\mathbb{R}}$ is \emph{diffeomorphic} to $\mathbb{R}\p^3$
(note that this $X_t^{\mathbb{R}}$ is smooth and connected).

Further, $X_t^{\mathbb{R}}$ is contained in an affine space
$\mathbb{R}^N$, some $N$, because $\sum x_i^4 \ne 0$ over
$\mathbb{R}$. Then the function $F_p := \text{dist}(\cdot,p)$
defines a Morse function on $X_t^{\mathbb{R}}$ for very general
points $p\in\mathbb{R}^N$. (Here $\text{dist}(x,y) := \|x-y\|^2$
is the standard Euclidean distance.)

The layers of $F_p$ yield a vector field on $X_t^{\mathbb{R}}$,
which is non-degenerate and normal to these layers outside two
points, where this field vanishes. We thus obtain a (Hopf)
fibration on $X_t^{\mathbb{R}}$ with a section
$F_p^{-1}(o)\setminus{\{\text{2 points}\ o_1,o_2\}} =
\mathbb{R}\p^2$ such that $F_p^{-1}(o)\subset X_t^{\mathbb{R}}$ as
an algebraic subset. It remains to apply a diffeomorphism over
$F_p^{-1}(o)\setminus{\{o_1,o_2\}}$ which makes
$X_t^{\mathbb{R}}\setminus{\{F_p^{-1}(o_1),F_p^{-1}(o_2)\}} =
\mathbb{R}\p^1\times F_p^{-1}(o)\setminus{\{o_1,o_2\}}$ as
algebraic varieties.

The upshot of the above discussion is that $X_t^{\mathbb{R}}$
(hence $X_t$) admits \emph{many cancellations} in the sense of
\cite{bkk}. This implies that $X_t$ is unirational.

\refstepcounter{equation}
\subsection{}
\label{subsection:con-2}

We conclude with the following questions:

\begin{itemize}

    \item What is the Fano $3$-fold which the quartic $X_6$ is
    $G$-birationally isomorphic to (cf. Section~\ref{section:pro})?

    \smallskip

    \item Are there non-trivial $G$-birational modifications
    of $X_6$ for other subgroups $G \subset S_6$?

    \smallskip

    \item Is $X_t$ unirational over a number field
    field?\footnote{Note that all rational quartics are $\ra$-rational.}

    \smallskip

    \item Does the set of $\mathbb{Q}$-points on $X_t$ satisfy the potential
    density property?

    \smallskip

    \item Does $X_t$ carry a pencil of (birationally)
    Abelian surfaces?\footnote{Again this holds for rational $X_t$.}

\end{itemize}

\bigskip

\end{document}